\author{Skyler Marks}
	\newcommand{\xto}{\xrightarrow}
	\DeclareMathOperator{\Kom}{\textbf{CCH}}
	\DeclareMathOperator{\Ext}{Ext}
	\DeclareMathOperator{\cok}{cok}
	\DeclareMathOperator{\Cok}{Cok}
	\DeclareMathOperator{\Ker}{Ker}
	\DeclareMathOperator{\im}{im}
	\DeclareMathOperator{\ima}{Im}
	\DeclareMathOperator{\coim}{coim}
	\renewcommand{\hom}{\mathrm{Hom}}
	\renewcommand{\[}{\begin{equation}}
	\renewcommand{\]}{\end{equation}}
\def\tagform@#1{\maketag@@@{\ignorespaces(#1\unskip\@@italiccorr)}}
\newtheorem{theorem}{Theorem}[section]
\newtheorem{corollary}{Corollary}[theorem]
\newtheorem{lemma}[theorem]{Lemma}
\newtheorem{definition}{Definition}[section]
\title{A Categorical Development of Right Derived Functors}
\author{Skyler Marks\thanks{Boston University}}
\begin{document}
\maketitle

\begin{abstract}
	Category theory is the language of homological algebra, allowing us to state
	broadly applicable theorems and results without needing to
	specify the details for every instance of analogous objects.
	However, authors often stray from the realm of pure abstract category
	theory in their development of the field, leveraging the Freyd-Mitchell
	embedding theorem or similar results, or otherwise using set-theoretic
	language to augment a general categorical discussion. This paper seeks to
	demonstrate that - while it is not necessary for most mathematicians'
	purposes - a development of homological concepts can be contrived from
	purely categorical notions. We begin by outlining the categories we will
	work within, namely Abelian categories (building off additive categories).
	We continue to develop cohomology groups of sequences, eventually
	culminating in a development of right derived functors. This paper is
	designed to be a
	minimalist construction, supplying no examples or motivation beyond what is
	necessary to develop the ideas presented. 
\end{abstract}

\pagebreak
\section{Categorical Background}
\subsection{Set - Theoretic Preface}
In order to leverage the full power of category theory, we endeavor to work in a
general category with no set-theoretic language. As such, we will not restrict
ourselves to working within a small category; the classes of objects and
morphisms within the
category may be a proper class. However, we will need some machinery to work
with these classes. As such, the axioms of
Zermelo - Fraenkel + choice set theory (\textbf{ZFC}) will make for a clumsy
formalism for our purposes (as a general class is not treated). To rectify this
issue, we turn
to the conservative extension of \textbf{ZFC},  von Neumann - Bernays - Gödel
set theory (\textbf{NBG}), for our treatment of classes. More detail can be found
in \autocite{men}.  
A category, for the purpose of this paper, will be thought of as a (possibly proper) class
of objects together with (possibly proper) classes of morphisms for each pair of
objects (pairing allows for this, as each object can be represented as a set). Most categories of algebraic
objects are not small (or even essentially small) categories; thus, although
considering general classes is
often irrelevant to a mathematician's daily work, it is not entirely without
justification.
\subsection{Additive Categories}
The additive category forms the foundation for abelian categories, which in turn
comprise the
realm in which general homological algebra is performed. There are
many formulations of an additive category, most of which specify outright that the
classes $\hom(A, B)$ (morphisms between objects $A$ and $B$) should be abelian groups.
This amounts to adding a structure to the category, which (although easy to work
with) is somewhat undesirable. Our definition shies away
from this measure, instead characterizing additive categories as ones which
include certain objects. These certain objects will allow us to \textit{induce} 
an abelian group on the hom-classes of an additive category, without
specifying any additional structure.

Perhaps the most central of these objects is a
\textbf{biproduct} for any two elements $A$ and $B$, generalizing the notion of
a direct sum of abelian groups or the carteisan product of two sets to an arbitrary category.
Such a set theoretic product 
is generally a set of pairs $(a, b)$, with $a$ belonging to the first set of the product
and $b$ belonging to the second. We generalize this notion to a categorical
framework, using
morphisms which project off the product (effectively mapping $(a, b)$ to either
$a$ or $b$). Then, as is standard with category theory, we can dualize this
definition. 
\begin{definition}
	The \textbf{binary product} $A\times B$ in a category $\mathscr A$ is an object in
	$\mathscr A$ together with morphisms $\pi_1: A\times B\to A$ and $\pi_2: A\times B\to
	B$ such that for any object $C$ in $\mathscr A$ with maps $f:C\to A$ and $f':C\to B$
	there is a unique map $g:C\to A\times B$ such that $\pi_1 \circ g = f$ and
	$\pi_2 \circ g = f'$. The \textbf{binary coproduct} $A\amalg B$ is the dual
	of this; an object in $\mathscr A$ with maps $\iota_1: A\to A\amalg B$ and
	$\iota_2:B\to A\amalg B$ such
	that for any object $C$ in $\mathscr A$ with maps $f:A\to C$ and $f':B\to C$
	there is a unique map $g: A\amalg B\to C$ such that $g\circ\iota_1 = f$ and
	$g\circ \iota_2 = f'$.
\end{definition}

Some examination shows that the coproduct in the category of sets is the
disjoint union, and that the coproduct in the category of groups is the internal
direct product. Of note, however, is that in the category of groups the internal
direct sum is isomorphic to the external direct sum; speaking
categorically, the product is also a coproduct. We generalize this notion as a
\textbf{biproduct}, which will form the foundation for our construction of
abelian categories. 

\begin{definition}
	For objects $A$ and $B$ in a category $\mathscr A$ a \textbf{biproduct} is
	an object of $\mathscr A$ which is both a product and a coproduct with
	\[\pi_1\circ \iota_1 = 1_A\mathrm{\ and\ }\pi_2\circ \iota_2 =
	1_B\label{prod_ident}\]
\end{definition}

We will use this object to construct a "sum" of two morphisms. The associativity
of the product, crucial to the proof that the constructed sum forms a group, is
given by a repeated application of the definition; for more detail, see \autocite{maclane}. We
record this as a lemma for reference later.
\begin{lemma}\label{prod_assoc}
	The product $(A\oplus B)\oplus C$ is naturally isomorphic to $A\oplus
	(B\oplus C)$.
\end{lemma}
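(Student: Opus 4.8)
The plan is to establish the natural isomorphism $(A\oplus B)\oplus C \cong A\oplus(B\oplus C)$ by exhibiting a canonical morphism in each direction and showing they are mutually inverse, using only the universal property of the product (and its dual for the coproduct). Since a biproduct is in particular a product, I will work with the product structure and invoke the uniqueness clause of the universal property repeatedly; this is the standard mechanism by which associativity is forced.

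First I would set up the relevant projection morphisms. Writing $P = (A\oplus B)\oplus C$, the outer product supplies projections onto $A\oplus B$ and onto $C$, and composing the first of these with the projections of the inner product $A\oplus B$ yields three morphisms $p_A\colon P\to A$, $p_B\colon P\to B$, $p_C\colon P\to C$. Symmetrically, $Q = A\oplus(B\oplus C)$ supplies $q_A, q_B, q_C$. The key observation is that $P$, together with the triple $(p_A,p_B,p_C)$, satisfies the universal property of a \emph{ternary} product of $A$, $B$, and $C$: given any object $X$ with maps to $A$, $B$, $C$, I first use the inner universal property to factor the maps into $A\oplus B$, then the outer one to land in $P$, and uniqueness at each stage gives uniqueness overall. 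The same argument shows $Q$ is a ternary product.

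Next I would invoke the fact that any two objects satisfying the same universal property are canonically isomorphic: applying the universal property of $Q$ to the cone $(P; p_A,p_B,p_C)$ gives a unique $g\colon P\to Q$ compatible with projections, and symmetrically a unique $h\colon Q\to P$. To see $h\circ g = 1_P$, I note that $1_P$ is the unique endomorphism of $P$ commuting with all three projections (again by the ternary universal property applied to $P$ itself as a cone over $A,B,C$), and $h\circ g$ has this property by construction; hence they coincide, and symmetrically $g\circ h = 1_Q$. Naturality amounts to checking that these canonical maps commute with morphisms $A\to A'$, $B\to B'$, $C\to C'$ acting on the products, which follows once more from uniqueness in the universal property.

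The main obstacle, I expect, is purely organizational rather than conceptual: carefully formulating the ternary universal property and verifying that the binary product, iterated, genuinely satisfies it requires threading the uniqueness clauses through two nested applications without ambiguity. There is a mild subtlety in that the statement names $\oplus$ (a biproduct) rather than $\times$, so strictly one should confirm that it suffices to treat the product structure alone — which it does, since an isomorphism of the underlying products automatically respects the coproduct structure by the dual argument, or simply because the biproduct object is determined up to isomorphism by either of its universal properties. I would therefore carry out the product-side argument in full and remark that the coproduct side is dual, as the excerpt itself defers fuller detail to \autocite{maclane}.
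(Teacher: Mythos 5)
Your proposal is correct, and it is precisely the ``repeated application of the definition'' that the paper gestures at: the paper itself supplies no proof of this lemma, deferring entirely to \autocite{maclane}, and the standard argument there is exactly your route of showing both iterated products satisfy the ternary universal property and are therefore canonically (and, by uniqueness, naturally) isomorphic. Your remark that the product-side argument suffices for the statement as written is also right, since the lemma only asserts a natural isomorphism of the products.
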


In order to leverage this this construction towards a sum of two morphisms, we
demonstrate the existence of a "biproduct of morphisms" with a construction
generalizing the
notion of the homomorphism $(x, y) \mapsto (f(x), g(y))$ mapping between
direct sums of groups, for homomorphisms $f$ and $g$. Speaking in categorical
terms, the only way to refer to the "components"
of the product is by use of the canonical projections; thus, we
accomplish this generalization by constructing a morphism which commutes with
the canonical projections off the biproducts.

\begin{lemma}\label{fg}
	For each pair for each pair
	of morphism $f:A\to B$ and $g:A'\to B'$, there is a unique morphism
	$(f\oplus
	g):A\oplus A'\to B\oplus B'$ such that $\pi_1\circ (f\oplus g) = f\circ p_1$ and $\pi_2\circ (f\oplus g)
	= g\circ p_2$,
	where $\pi_1$ and $\pi_2$ are the canonical projections from $B\oplus B'$ to
	$B$ and $B'$ respectively, and $p_1, p_2$ are the projections from $A\oplus
	A'$ to $A$ and $A'$ respectively, all as given by the
	definition of a categorical biproduct.
\end{lemma}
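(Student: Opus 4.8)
The plan is to obtain $(f\oplus g)$ directly from the universal property of the product, which is available because the biproduct $B\oplus B'$ is in particular a product. The two conditions demanded of $(f\oplus g)$ — namely $\pi_1\circ(f\oplus g) = f\circ p_1$ and $\pi_2\circ(f\oplus g) = g\circ p_2$ — are precisely the equations that the universal property of a binary product resolves, once the correct test maps are identified. The whole argument therefore reduces to recognizing $A\oplus A'$ as the test object and assembling the appropriate composites into it.

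First I would form the two composite morphisms emanating from the source object $A\oplus A'$. Taking $C = A\oplus A'$, I compose the canonical projection $p_1:A\oplus A'\to A$ with $f$ to obtain $f\circ p_1:A\oplus A'\to B$, and likewise compose $p_2:A\oplus A'\to A'$ with $g$ to obtain $g\circ p_2:A\oplus A'\to B'$. These are exactly the two maps — one landing in $B$, one in $B'$ — that the universal property of the target product requires as input.

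Next I would invoke the defining property of the product $B\oplus B'$ with test object $C = A\oplus A'$ and test maps $f\circ p_1$ and $g\circ p_2$. By Definition of the binary product there is a unique morphism $C\to B\oplus B'$, which I name $(f\oplus g)$, satisfying $\pi_1\circ(f\oplus g) = f\circ p_1$ and $\pi_2\circ(f\oplus g) = g\circ p_2$. Since the universal property delivers both existence and uniqueness simultaneously, nothing further is needed to close the argument.

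I do not anticipate a genuine obstacle: the statement is essentially a repackaging of the product's universal property, and the only point worth flagging is that the coproduct half of the biproduct structure plays no role here — the product structure of the target alone pins down $(f\oplus g)$. One could alternatively build a candidate morphism out of the coproduct structure of the source $A\oplus A'$ from the maps $\iota_1\circ f$ and $\iota_2\circ g$, but since the identities to be verified are phrased through the projections $\pi_1,\pi_2$, the product route is the most direct and yields uniqueness for free; whether the two constructions coincide is a separate matter requiring the biproduct identities \eqref{prod_ident} and is not needed for the present lemma.
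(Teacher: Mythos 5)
Your proposal is correct and is essentially identical to the paper's own proof: both take $C = A\oplus A'$ with test maps $f\circ p_1$ and $g\circ p_2$ and invoke the universal property of the product $B\oplus B'$ to obtain existence and uniqueness in one step. Your closing remark that only the product half of the biproduct structure is used is accurate and consistent with the paper.
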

\begin{proof}
	The product $B\oplus B'$ projects onto $B$ and $B'$, and is universal with
	respect to that property. That is, for any other object $C$ and any other
	morphism $\pi'_1:C\to B, \pi'_2:C\to B'$ there must be a unique morphism $\phi: C\to B\oplus B$
	such that $\pi'_1 = \pi_1\circ\phi$ and $\pi'_2 = \pi_2\circ\phi$. Letting $C=
	A\oplus A'$, $\pi'_1 = f\circ p_1$ and $\pi'_2 = g\circ p_2$ 
	gives us a unique morphism $\phi = (f\oplus g): A\oplus A'\to B\oplus B'$ such that
	$\pi_1\circ(f\oplus g) = f\circ p_1$, and $\pi_2\circ (f\oplus g) = g\circ p_2$, as intended. 
\end{proof}
To leverage the duality inherent within categorical proofs, we demonstrate that
this morphism also commutes with inclusions.
\begin{lemma}
	The morphism $f\oplus g$ also satisfies $f\oplus g\circ i_1 = \iota_1\circ
	f$ and $f\oplus g\circ i_2 = \iota_2\circ g$, where $i_1$ and $i_2$ are the
	inclusions into $A\oplus A$ and $\iota_1$ and $\iota_2$ the inclusions into
	$B\oplus B$.\label{fg'}
\end{lemma}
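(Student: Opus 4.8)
The plan is to run the exact dual of the argument used for Lemma \ref{fg}. There, the product universal property of $B\oplus B'$ manufactured $f\oplus g$ as the unique map commuting with the two projections; here I will exploit that $A\oplus A'$ is simultaneously a \emph{coproduct} and invoke its universal property with respect to the inclusions $i_1,i_2$. First I would apply the coproduct universal property of $A\oplus A'$ to the object $B\oplus B'$ equipped with the two maps $\iota_1\circ f:A\to B\oplus B'$ and $\iota_2\circ g:A'\to B\oplus B'$. This yields a unique morphism $\phi:A\oplus A'\to B\oplus B'$ satisfying $\phi\circ i_1=\iota_1\circ f$ and $\phi\circ i_2=\iota_2\circ g$, which are precisely the relations claimed for $f\oplus g$. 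It then remains to identify $\phi$ with $f\oplus g$; equivalently, to show that $f\oplus g$ itself satisfies these two inclusion relations.

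To verify $(f\oplus g)\circ i_1=\iota_1\circ f$ (a pair of maps $A\to B\oplus B'$), I would use that $B\oplus B'$ is a product, so its projections $\pi_1,\pi_2$ are jointly monic: two maps into $B\oplus B'$ coincide as soon as they agree after post-composition with both $\pi_1$ and $\pi_2$. Post-composing with $\pi_1$ and invoking Lemma \ref{fg} together with the biproduct identity \eqref{prod_ident} gives $\pi_1\circ(f\oplus g)\circ i_1=f\circ p_1\circ i_1=f\circ 1_A=f$, while $\pi_1\circ\iota_1\circ f=1_B\circ f=f$; the two sides match. The remaining check, after post-composing with $\pi_2$, reduces to $g\circ(p_2\circ i_1)=(\pi_2\circ\iota_1)\circ f$, and the symmetric computation handles $i_2$.

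The main obstacle is exactly this last reduction: it involves the \emph{cross terms} $p_2\circ i_1:A\to A'$ and $\pi_2\circ\iota_1:B\to B'$, which are not pinned down by the biproduct axioms, since \eqref{prod_ident} only constrains the diagonal composites $p_1\circ i_1$ and $p_2\circ i_2$. In the intended additive setting these off-diagonal composites are the zero morphism, so both sides vanish and the identity closes; absent a zero object and the attendant zero morphisms this step cannot be completed, so the crux is to have first established that the cross terms vanish. Once that is in hand, joint monicity of $\pi_1,\pi_2$ forces $(f\oplus g)\circ i_1=\iota_1\circ f$, and dually for $i_2$, completing the identification $\phi=f\oplus g$ and hence the lemma.
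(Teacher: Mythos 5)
You have correctly located the one step in your argument that does not follow from anything available before this lemma: the $\pi_2$-component of your ``jointly monic projections'' check reduces to $g\circ(p_2\circ i_1)=(\pi_2\circ\iota_1)\circ f$, which needs the off-diagonal composites $p_2\circ i_1$ and $\pi_2\circ\iota_1$ to be zero. But you flag this as ``the crux'' without closing it, and in this paper it cannot be closed by citation: the vanishing of the cross terms is precisely Lemma \ref{izero}, which appears \emph{after} the present lemma and whose proof invokes Lemma \ref{fg'} (together with Lemma \ref{fg}) in an essential way. The biproduct axiom \eqref{prod_ident} constrains only the diagonal composites $\pi_1\circ\iota_1$ and $\pi_2\circ\iota_2$, and the definition of an additive category here does not posit $\pi_2\circ\iota_1=0$ outright, so importing that fact at this stage makes the development circular. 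As it stands, therefore, your proposal has a genuine gap: it is a proof conditional on a statement that, in this paper's logical order, is a downstream consequence of the very lemma you are proving.

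The paper's own proof deliberately avoids the cross terms. It starts from the defining equations $\pi_1\circ(f\oplus g)=f\circ p_1$ and $\pi_2\circ(f\oplus g)=g\circ p_2$ of Lemma \ref{fg}, composes with inclusions on both sides, simplifies using \eqref{prod_ident}, and then concludes that $f\oplus g$ must coincide with the unique map supplied by the coproduct property of $A\oplus A'$ (the dual of the uniqueness argument in Lemma \ref{fg}). Your first paragraph already contains that uniqueness appeal---you construct $\phi$ as the unique map with $\phi\circ i_1=\iota_1\circ f$ and $\phi\circ i_2=\iota_2\circ g$---so the repair is to discard the joint-monicity test entirely and instead verify the two inclusion equations for $f\oplus g$ directly from the Lemma \ref{fg} equations, after which uniqueness of $\phi$ finishes the identification without ever touching $\pi_2\circ\iota_1$ or $p_2\circ i_1$.
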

\begin{proof}
	The morphism $f\oplus g$ is the unique morphism satisfying $\pi_1\circ
	(f\oplus g) = f\circ p_1$ and $\pi_2\circ (f\oplus g) = f\circ p_2$, as
	above. However, if we apply an inclusion on the left and right of each of
	these equations we obtain:
	$$\iota_1\circ\pi_1\circ(f\oplus g)\circ i_1 = \iota_1\circ f\circ p_1\circ i_1
	\text{ and }
	\iota_2\circ\pi_2\circ(f\oplus g)\circ i_2 = \iota_2\circ f\circ p_2\circ i_2$$
	Which simplify by \eqref{prod_ident} to 
	$$(f\oplus g)\circ i_1 = \iota_1\circ f
	\text{ and }
	(f\oplus g)\circ i_2 = \iota_2\circ f$$
	As intended. Thus $f\oplus g$ must be the unique morphism (with uniqueness
	given by an argument dual to lemma \ref{fg}) satisfying this property.
\end{proof}
The final constructions the biproduct contributes are the diagonal map,
generalizing a group homomorphism $x\mapsto (x, x)$, and its dual the codiagonal.
The codiagonal is especially powerful, effectively generalizing the notion of
addition. Note, especially, that existence of these maps follows from the
existence of the biproduct.
\begin{definition}
	The \textbf{diagonal map} $d: A\to A\oplus A$ is the unique morphism given
	by substituting the $A$ and the identity maps $1_A:A\to A$ into the categorical
	definition of a product, such that $\pi_1\circ d = \pi_2\circ d = 1_A$. The
	\textbf{codiagonal map} $d':A\amalg A\to A$ is the dual of this notion for a
	coproduct, with $d'\circ\iota_1 = d'\circ\iota_2 = 1_A$.
\end{definition}
These constructions, together with a few standard categorical definitions, allow
us to state the definition of an additive category.
\begin{definition}\label{add}
	An \textbf{additive} category $\mathscr{A}$ satisfies three axioms:
	\begin{enumerate}
		\item The category $\mathscr A$ has an object which is both initial and
			final, called a zero object or 0. A morphism which factors through
			the zero object is called a zero morphism, often denoted 0 as well
			by abuse of notation.
		\item For each pair of objects $X, Y$ in $\mathscr A$, there is a
			biproduct $X\oplus Y$ which satisfies the definitions for both
			products and coproducts in such a way that cannonical projections
			commute with cannonical inclusions. This condition
			allows us to define an operation $+$ on the class $\hom_\mathscr{A}(X, Y)$ 
			as the composition
				\[A\overset{d}\longrightarrow A\oplus A \overset{f\oplus g}{\longrightarrow}
				B\oplus
				B\overset{d'}\longrightarrow B\label{op} \] 
			Where $d$ is the diagonal map, $f, g$ is the map given in lemma
			\ref{fg},  and $d'$ is the codiagonal map.
		\item For every object $A$ in the category $\mathscr A$, there is a
			morphism $-1_A$ such that $1_A + (-1_A) = 0_A$, where $1_A$ is the
			identity map on $A$, and $0_A$ is the  morphism that factors through 0 (unique, as 0 is initial and
			final). This condition ensures that $\hom(A, B)$ is an
			abelian group.\\
	\end{enumerate}
\end{definition}
We will show that composition of morphisms distributes over the operation
defined by equation \eqref{op}, and that the first two conditions give
$\hom(A, B)$ under \eqref{op} the structure of a commutative monoid. From this, we will demonstrate
that condition 3. suffices to turn the hom-classes into abelian groups. Before
we can embark on the proof of this fact, however, we must prove some results
about the natures of the objects which we seek to work with.

	In any ring, the zero object (the identity of the additive group) satisfies
$0a = 0b$ for any two objects $a$ and $b$. We will see that the zero object of a
category reflects this property, and indeed, does so uniquely. This fact will be
useful in our endeavor to demonstrate that zero is an additive identity.
\begin{lemma}\label{zero}
	The zero morphism $0:B\to B$ "equalizes any two morphisms $f$ and $g$" -
	that is, $0\circ f = 0\circ g$ for any
	$f, g:A\to B$, for any $B$, and is the unique morphism from $B$ to $B$ which
	equalizes any two morphisms from any $A$ to $B$.
\end{lemma}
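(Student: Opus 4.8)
The plan is to split the statement into its two assertions—first that $0_{B\to B}$ equalizes every pair $f,g\colon A\to B$, and second that it is the \emph{only} endomorphism of $B$ with this property—and to derive both from the single fact (Definition \ref{add}, item 1) that the zero object is simultaneously initial and terminal. Throughout I would write the zero morphism $0_{B\to B}$ as the composite $B\xrightarrow{\tau}0\xrightarrow{\iota}B$, where $\tau$ is the unique morphism into the terminal object $0$ and $\iota$ the unique morphism out of the initial object $0$.

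For the equalizing assertion, I would take an arbitrary $f\colon A\to B$ and observe that $0_{B\to B}\circ f=\iota\circ\tau\circ f=\iota\circ(\tau\circ f)$. The morphism $\tau\circ f\colon A\to 0$ maps into the terminal object, hence is the unique such morphism; therefore $0_{B\to B}\circ f$ cannot depend on $f$ at all, and equals the unique morphism $A\to B$ factoring through $0$, i.e. $0_{A\to B}$. Since this value is the same for every choice of map, we obtain $0_{B\to B}\circ f=0_{A\to B}=0_{B\to B}\circ g$ for all $f,g\colon A\to B$, which is exactly the equalizing property.

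For uniqueness, suppose $h\colon B\to B$ equalizes every pair of morphisms with codomain $B$. The decisive move is to feed this hypothesis the right test pair: specializing to $A=B$, $f=1_B$, and $g=0_{B\to B}$ gives $h\circ 1_B=h\circ 0_{B\to B}$, that is, $h=h\circ 0_{B\to B}$. It then remains to verify that postcomposition annihilates the zero morphism, $h\circ 0_{B\to B}=0_{B\to B}$. Writing $h\circ 0_{B\to B}=(h\circ\iota)\circ\tau$, the morphism $h\circ\iota\colon 0\to B$ issues from the initial object and so must coincide with $\iota$, whence $h\circ 0_{B\to B}=\iota\circ\tau=0_{B\to B}$. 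Combining this with the previous equation yields $h=0_{B\to B}$, establishing uniqueness.

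I expect the only genuine obstacle to be bookkeeping rather than conceptual difficulty: one must be scrupulous about composition order and about invoking terminality versus initiality at the correct moment—terminality to collapse $\tau\circ f$ in the first part, initiality to collapse $h\circ\iota$ in the second. Both halves lean on the auxiliary observation that any morphism factoring through $0$ is \textbf{the} unique zero morphism between its endpoints, which is itself an immediate consequence of the zero object's defining universal properties and should be stated explicitly before the two main computations.
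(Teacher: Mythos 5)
Your proof is correct and follows essentially the same route as the paper's: both halves rest on the uniqueness of morphisms into and out of the zero object, and the uniqueness argument uses the very same test pair $f=1_B$, $g=0_{B\to B}$. If anything you are slightly more scrupulous than the paper, which passes from $z\circ 1_B = z\circ 0$ to $z=0$ without the explicit appeal to initiality (that $h\circ\iota$ must equal $\iota$) that you supply.
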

\begin{proof}
	For any two objects $A, B$ in $\mathscr A$, consider the unique morphism
	$0:B\to B$ which factors through the zero object. Let $f, g:A\to B$. Then
	$0\circ f$ is a morphism mapping $A$ to $B$ which factors through 0, and
	thus is unique in this respect. However, $0\circ g$ is another such
	morphism, so $0\circ f = 0\circ g$. Conversely, suppose $z:B\to B$ has the
	property $z\circ f = z\circ g$ for every $f, g:A\to B$, for any $A$. Then,
	in particular when $A = B$ and $f=1_B, g= 0:B\to B$ we have $z\circ 1_B = 0$
	and thus $z=0$. Thus $0$ is the unique morphism which equalizes every pair of
	morphisms with $B$ as their target.
\end{proof}
The lemma above finds use in the following proof, which in turn will be used to
prove that the zero morphism is the additive identity for the monoid
$(\hom_\mathscr{A}(A, B), +)$. Intuitively, we can think of this lemma as
stating that a projection followed by an inclusion is a morphism which retains
no information; the only information which retains no information about
morphisms composed with it is the zero morphism. 
\begin{lemma}\label{izero}
	For a biproduct $B\oplus B$ in an additive category $\mathscr A$ with
	canonical projections $\pi_1, \pi_2$ and inclusions $\iota_1, \iota_2$, we
	have $\pi_1\circ \iota_2 = \pi_2\circ \iota_1 = 0$. 
\end{lemma}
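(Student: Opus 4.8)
The plan is to show that $\pi_1\circ\iota_2$ is a morphism $B\to B$ that equalizes any pair of morphisms with target $B$, and then invoke Lemma \ref{zero} to conclude it must be the zero morphism; the argument for $\pi_2\circ\iota_1$ is identical (or follows by the symmetry between the two coordinates). So the real content is producing, for an arbitrary pair $f,g:A\to B$, the identity $(\pi_1\circ\iota_2)\circ f = (\pi_1\circ\iota_2)\circ g$.

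To produce that identity, first I would use the $f\oplus g$ machinery from Lemmas \ref{fg} and \ref{fg'}. Given $f,g:A\to B$, form $f\oplus g:A\oplus A\to B\oplus B$. Lemma \ref{fg} gives the projection relations $\pi_1\circ(f\oplus g)=f\circ p_1$ and $\pi_2\circ(f\oplus g)=g\circ p_2$, while Lemma \ref{fg'} gives the inclusion relations $(f\oplus g)\circ i_2=\iota_2\circ g$ (using the inclusions $i_1,i_2$ into $A\oplus A$ and $\iota_1,\iota_2$ into $B\oplus B$). The idea is to compute $\pi_1\circ\iota_2\circ g$ by routing $g$ through the second inclusion $i_2$ of $A\oplus A$: using Lemma \ref{fg'}, $\iota_2\circ g=(f\oplus g)\circ i_2$, so
\[
\pi_1\circ\iota_2\circ g=\pi_1\circ(f\oplus g)\circ i_2=f\circ p_1\circ i_2,
\]
where the last equality is Lemma \ref{fg}. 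By Lemma \ref{izero} applied one level down — or rather, this is exactly the phenomenon we are trying to establish — $p_1\circ i_2$ is the cross term at the level of $A\oplus A$, so I cannot simply assert it vanishes without circularity.

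The hard part, then, is breaking this apparent circularity: the cross term for $B\oplus B$ seems to reduce to the cross term for $A\oplus A$. The cleaner route, which I would adopt instead, avoids $f\oplus g$ entirely and works with a \emph{single} object. Take $B\oplus B$ with its projections and inclusions, and consider the composite $\iota_2\circ\pi_1:B\oplus B\to B\oplus B$; but more directly, I would verify the universal property directly. Observe that $\pi_1\circ\iota_2:B\to B$, precomposed with any $f:A\to B$, gives a map $A\to B$. To show it equalizes all such pairs via Lemma \ref{zero}, it suffices to show $\pi_1\circ\iota_2$ is itself a zero morphism, i.e. that it factors through $0$. For this I would use that $\iota_2$ is the coproduct inclusion characterized by $\pi_1\circ\iota_2$ and $\pi_2\circ\iota_2=1_B$ together determining a unique map; comparing with the candidate map whose components are $0:B\to B$ (factoring through the zero object) and $1_B$, the uniqueness clause of the product's universal property forces $\pi_1\circ\iota_2=0$, since both $\iota_2$ and this candidate satisfy the same two defining equations $\pi_2\circ(-)=1_B$ and $\pi_1\circ(-)=0$. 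Concretely: let $h:B\to B\oplus B$ be the unique map with $\pi_1\circ h=0$ and $\pi_2\circ h=1_B$ guaranteed by the product property; I claim $h=\iota_2$. Establishing this claim — that the coproduct inclusion $\iota_2$ has vanishing first projection — is the crux, and it follows because $\iota_2$ and $h$ agree on $\pi_2$ (both give $1_B$ by \eqref{prod_ident}) while one checks $\pi_1\circ\iota_2$ and $0$ both serve as the first component, so uniqueness collapses them. The symmetric statement $\pi_2\circ\iota_1=0$ follows by interchanging the roles of the two factors.
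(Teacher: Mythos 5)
Your second, ``cleaner'' route is circular at exactly the step you yourself flag as the crux. The uniqueness clause of the product's universal property identifies two maps $B\to B\oplus B$ only when they agree on \emph{both} projections. You establish that $\iota_2$ and your candidate $h$ agree on $\pi_2$ (both give $1_B$ by \eqref{prod_ident}), but to conclude $\iota_2=h$ you must also know $\pi_1\circ\iota_2=0=\pi_1\circ h$ --- and $\pi_1\circ\iota_2=0$ is precisely the statement of Lemma \ref{izero}. The phrase ``one checks $\pi_1\circ\iota_2$ and $0$ both serve as the first component'' is the entire content of the lemma, and no such check is supplied; agreement on a single projection does not determine a map into a product. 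Nothing in the paper's definition of a biproduct (product, coproduct, and the identities \eqref{prod_ident}) hands you the first component of $\iota_2$ for free, which is why the lemma needs a genuine argument rather than an appeal to uniqueness.

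Ironically, the route you abandoned as circular is the one that works, and is the paper's proof. Having computed $\pi_1\circ\iota_2\circ g=f\circ p_1\circ i_2$ via $f\oplus g$, you do not need the cross term $p_1\circ i_2$ to vanish; you only need to notice that the right-hand side is controlled by the \emph{first} slot of the biproduct of morphisms, which is free to choose independently of what is being routed through $i_2$. Concretely, for any $f,g:A\to B$, Lemma \ref{fg'} gives $\iota_2\circ f=(g\oplus f)\circ i_2$ and then Lemma \ref{fg} gives $\pi_1\circ\iota_2\circ f=g\circ p_1\circ i_2$; the same two steps applied to $g\oplus g$ give $\pi_1\circ\iota_2\circ g=g\circ p_1\circ i_2$ as well. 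Hence $\pi_1\circ\iota_2\circ f=\pi_1\circ\iota_2\circ g$ for every pair, so $\pi_1\circ\iota_2$ equalizes all morphisms into $B$ and Lemma \ref{zero} forces $\pi_1\circ\iota_2=0$; the cross term one level down is never evaluated, it simply cancels against itself. You should replace the universal-property argument with this computation (the case $\pi_2\circ\iota_1=0$ then follows by the symmetry you describe).
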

\begin{proof}
	Let any pair of morphisms $f, g:A\to B$ for arbitrary $A$. Consider the
	composition $\pi_1\circ \iota_2\circ f$. By lemma \ref{fg'} we can rewrite
	this as $\pi_1\circ g\oplus f\circ \iota_2$. But by lemma \ref{fg} we can
	rewrite this again as $g\circ \pi_1\circ \iota_2$, which we can rewrite
	again by lemma $\ref{fg}$ applied once more as $\pi_1\circ g\oplus g\circ
	\iota_2$, which by lemma \ref{fg'} equals $\pi_1\circ\iota_2\circ g$. Thus
	$\pi_1\circ \iota_2 \circ f = \pi_1\circ\iota_2\circ g$, and so $\pi_1\circ
	\iota_2:B\to B$ equalizes any pair of morphisms from any object $A$ into $B$. Thus
	by lemma \ref{zero}, $\pi_1\circ \iota_2 = 0$. A symmetric argument holds for
	$\pi_2\iota_1$.
\end{proof}
The following lemma, and its dual (recorded below for completeness) form the
foundation for proving that composition distributes over $+$. 
\begin{lemma}
	For any map $f:A\to B$ we have $(f\oplus f) \circ d = d\circ f$ where $(f\oplus f)$ is
	as in lemma \ref{fg} and $d$ is the diagonal map.\label{ff}
\end{lemma}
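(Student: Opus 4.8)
The plan is to exploit the universal property of the product $B\oplus B$. Both sides of the asserted equation are morphisms $A\to B\oplus B$: on the left, $d:A\to A\oplus A$ followed by $(f\oplus f):A\oplus A\to B\oplus B$; on the right, $f:A\to B$ followed by the diagonal $d:B\to B\oplus B$. By the uniqueness clause in the definition of a product, two morphisms into $B\oplus B$ that agree after post-composition with each of the canonical projections $\pi_1,\pi_2$ must coincide. So it suffices to verify $\pi_i\circ\bigl((f\oplus f)\circ d\bigr)=\pi_i\circ(d\circ f)$ for $i=1,2$.

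First I would compute the projections of the left-hand side. Writing $p_1,p_2$ for the projections of $A\oplus A$ and $\pi_1,\pi_2$ for those of $B\oplus B$, lemma \ref{fg} gives $\pi_i\circ(f\oplus f)=f\circ p_i$, and the defining property of the diagonal $d:A\to A\oplus A$ gives $p_i\circ d=1_A$. Composing and using associativity yields $\pi_i\circ(f\oplus f)\circ d=f\circ p_i\circ d=f\circ 1_A=f$ for each $i$.

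Next I would compute the projections of the right-hand side. Here the defining property of the diagonal $d:B\to B\oplus B$ gives $\pi_i\circ d=1_B$, so $\pi_i\circ(d\circ f)=1_B\circ f=f$ for each $i$. Since both composites agree with $f$ after projection onto each factor, the uniqueness part of the universal property of $B\oplus B$ forces $(f\oplus f)\circ d=d\circ f$, as claimed.

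The only real subtlety, and the point at which care is needed, is notational rather than conceptual: the symbol $d$ denotes \emph{two different} diagonal maps, one for $A$ and one for $B$, and one must keep the two families of projections straight ($p_i$ for $A\oplus A$ versus $\pi_i$ for $B\oplus B$). Once the bookkeeping is fixed, the argument is a direct chase through lemma \ref{fg} and the definitions of the diagonals, with the universal property supplying the final equality; no deeper obstacle arises.
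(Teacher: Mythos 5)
Your proof is correct and follows essentially the same route as the paper's: compute the two projections of each composite (getting $f$ in every case, via Lemma \ref{fg} and the defining property of the diagonals) and then invoke the uniqueness clause of the universal property of $B\oplus B$ applied to $A$ projecting onto $B$ twice via $f$. Your explicit note distinguishing the two diagonals and the two families of projections is a welcome clarification, but the argument itself is the same.
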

\begin{proof}
	From the definition of $(f\oplus f)$ we have that it is the unique morphism which
	satisfies $\pi_1\circ (f\oplus f) = \pi_2\circ (f\oplus f) = f\circ p_1$, and from the
	definition of the diagonal we have that it is a morphism which
	satisfies $\pi_1\circ d = \pi_2\circ d = 1_A$.  Thus we see that the
	composition $(f\oplus f)\circ d$ is a morphism with $\pi_1\circ (f\oplus f)\circ d
	= \pi_2\circ (f\oplus f)\circ d = f$. Substituting $A$ projecting
	via $f$ twice onto $B$ into the definition for the biproduct $B\oplus B$
	demonstrates that this morphism is unique with respect to this property.
	However, $\pi_1\circ d\circ f = \pi_2\circ d\circ f = f$ by definition of
	$d$. Thus $(f\oplus f)\circ d = d\circ f$.
\end{proof}
\begin{lemma}
	For any map $f:A\to B$ we have $d_B'\circ (f \oplus f)  = f\circ d_A'$,
	where $d'_A$ is the codiagonal $d': A\oplus A\to A$ and $d'_B$ the
	codiagonal $d':B\oplus B\to B$.\label{ff'}
\end{lemma}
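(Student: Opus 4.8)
The plan is to mirror the proof of Lemma \ref{ff} under categorical duality, replacing every appeal to the universal property of the product with the corresponding appeal to the universal property of the coproduct. Where the previous argument characterized $(f\oplus f)$ via its interaction with the projections (Lemma \ref{fg}) and pinned down $d\circ f$ as the unique morphism \emph{into} $B\oplus B$ with prescribed components, here I would characterize $(f\oplus f)$ via its interaction with the inclusions (Lemma \ref{fg'}) and pin down $f\circ d'_A$ as the unique morphism \emph{out of} $A\oplus A$ with prescribed restrictions.

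Concretely, first I would recall from Lemma \ref{fg'} that $(f\oplus f)\circ i_1 = \iota_1\circ f$ and $(f\oplus f)\circ i_2 = \iota_2\circ f$, and from the definition of the codiagonal that $d'_B\circ\iota_1 = d'_B\circ\iota_2 = 1_B$. Composing $d'_B\circ(f\oplus f)$ with each inclusion $i_1, i_2$ then collapses the middle via these identities, yielding $d'_B\circ(f\oplus f)\circ i_1 = d'_B\circ(f\oplus f)\circ i_2 = f$.

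Next I would observe that $f\circ d'_A$ satisfies the identical equations: since $d'_A\circ i_1 = d'_A\circ i_2 = 1_A$, we get $f\circ d'_A\circ i_1 = f\circ d'_A\circ i_2 = f$. Both $d'_B\circ(f\oplus f)$ and $f\circ d'_A$ are therefore morphisms $A\oplus A\to B$ whose precompositions with the two inclusions both equal $f$. Invoking the universal property of the coproduct $A\oplus A$ --- substituting $B$ as the test object and $f$ as both test maps --- there is exactly one morphism out of $A\oplus A$ with this property, so the two must coincide, giving $d'_B\circ(f\oplus f) = f\circ d'_A$.

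I do not anticipate a genuine obstacle here; the content is entirely a dualization of Lemma \ref{ff}. The only point demanding care is bookkeeping: one must invoke the inclusion-form of $(f\oplus f)$ from Lemma \ref{fg'} rather than its projection-form, and keep the two families of inclusions ($i_1, i_2$ into $A\oplus A$ and $\iota_1, \iota_2$ into $B\oplus B$) distinct, so that each codiagonal identity is applied to the correct coproduct.
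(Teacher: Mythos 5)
Your proposal is correct and is precisely what the paper intends: its proof of Lemma \ref{ff'} consists of the single line ``Dual to lemma \ref{ff} by lemma \ref{fg'},'' and your argument is exactly that dualization written out in full, using the inclusion-form identities of Lemma \ref{fg'} and the uniqueness clause of the coproduct's universal property. No gaps; you have simply made explicit what the paper leaves implicit.
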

\begin{proof}
	Dual to lemma \ref{ff} by lemma \ref{fg'}.
\end{proof}
We now have all the materials required to embark on the main proof of this
section. Although this result is mentioned offhand in \autocite{ha} and in the exercises of 
\autocite{maclane}, the proof thereof is the author's original work.
We begin by proving composition destributes over addition, which will assist us greatly in our
further proof. 
\begin{theorem}
	For objects $A, B$ in an additive category $\mathscr A$:
	\begin{enumerate}
		\item Composition of functions distributes over to $+$ defined
			by \eqref{op}.
		\item The class $\hom(A, B)$ has the structure of an abelian group with
			operation $+$.
	\end{enumerate}
\end{theorem}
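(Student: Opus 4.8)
The plan is to prove part (1) directly from the lemmas governing the diagonal and codiagonal, and then to bootstrap the full abelian group structure of part (2), reserving distributivity as the tool that manufactures additive inverses once the commutative monoid axioms are established. The one structural fact I will use repeatedly is the \emph{bifunctoriality} of $\oplus$: for composable morphisms, $(f'\circ f)\oplus(g'\circ g) = (f'\oplus g')\circ(f\oplus g)$. I would verify this in a single line by checking that both sides satisfy the universal property that characterizes the left-hand side in Lemma \ref{fg}, so that the uniqueness clause forces equality.

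Granting bifunctoriality, right distributivity $(f+g)\circ k = f\circ k + g\circ k$ follows by expanding $f+g$ through the definition \eqref{op}, pushing $k$ across the diagonal via Lemma \ref{ff} (which supplies $d_A\circ k = (k\oplus k)\circ d_D$), collapsing $(f\oplus g)\circ(k\oplus k) = (f\circ k)\oplus(g\circ k)$ by bifunctoriality, and reading off $f\circ k + g\circ k$. Left distributivity $h\circ(f+g) = h\circ f + h\circ g$ is the exact dual, pushing $h$ across the codiagonal with Lemma \ref{ff'} in place of Lemma \ref{ff}. This settles the first claim.

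For the monoid axioms I would proceed as follows. For the identity, I would show $(f\oplus 0) = \iota_1\circ f\circ p_1$ by uniqueness in Lemma \ref{fg}, using $\pi_2\circ\iota_1 = 0$ from Lemma \ref{izero}; then $f + 0 = d'_B\circ\iota_1\circ f\circ p_1\circ d_A = f$, since $d'_B\circ\iota_1 = 1_B$ and $p_1\circ d_A = 1_A$, with $0+f=f$ symmetric. For commutativity I would introduce the swap $\sigma:X\oplus X\to X\oplus X$ given by $\pi_1\circ\sigma = \pi_2$ and $\pi_2\circ\sigma = \pi_1$, observe $\sigma\circ d = d$ and $d'\circ\sigma = d'$ from the universal properties of the diagonal and codiagonal, verify $(g\oplus f) = \sigma_B\circ(f\oplus g)\circ\sigma_A$ by uniqueness, and conclude $g+f = d'_B\circ\sigma_B\circ(f\oplus g)\circ\sigma_A\circ d_A = f+g$. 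Associativity is the genuine obstacle. Here I would pass to the triple biproduct and define a symmetric triple sum $f+g+h := {d'}^{(3)}_B\circ(f\oplus g\oplus h)\circ d^{(3)}_A$, where the triple diagonal and codiagonal are characterized by having all coordinate projections (resp. inclusions) equal to the identity. Expanding $(f+g)+h$ and factoring $(f+g)\oplus h = (d'_B\oplus 1_B)\circ((f\oplus g)\oplus h)\circ(d_A\oplus 1_A)$ by bifunctoriality, the problem reduces to the coherence identities $(d_A\oplus 1_A)\circ d_A = d^{(3)}_A$ and $d'_B\circ(d'_B\oplus 1_B) = {d'}^{(3)}_B$, each checked by composing with coordinate projections and inclusions; the bracketing $f+(g+h)$ yields the same triple sum after invoking the associativity isomorphism of Lemma \ref{prod_assoc} together with its naturality and its compatibility with the triple diagonal and codiagonal. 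Tracking the associator through these identities is the delicate bookkeeping; every other equality is forced by a universal property.

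Finally, with $(\hom(A,B),+)$ shown to be a commutative monoid, I would produce inverses from part (1): setting $-f := f\circ(-1_A)$, left distributivity gives $f + f\circ(-1_A) = f\circ\bigl(1_A + (-1_A)\bigr) = f\circ 0_A = 0$, the last equality holding because $0_A$ factors through the zero object and hence so does $f\circ 0_A$. Thus every morphism has an additive inverse, and $\hom(A,B)$ is an abelian group, establishing the second claim. The expected main difficulty is the associativity step, precisely because it is the only place where a nontrivial isomorphism (the associator) enters rather than a bare uniqueness argument.
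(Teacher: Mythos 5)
Your proposal is correct, and in several places it takes a genuinely different route from the paper's proof. The core of part (1) is the same: both arguments push the outer morphism across the diagonal via Lemma \ref{ff} (resp.\ the codiagonal via Lemma \ref{ff'}) and then collapse the middle using the interchange $(f'\circ f)\oplus(g'\circ g)=(f'\oplus g')\circ(f\oplus g)$ --- you state this bifunctoriality explicitly as a lemma, whereas the paper uses it silently, so your version is the more careful one. The differences are in part (2). For the identity, the paper first proves $1_A+0=1_A$ by identifying $\iota_1$ with $(1_A\oplus 0)\circ d$ and then extends to general $f$ by distributivity; you instead identify $f\oplus 0$ with $\iota_1\circ f\circ p_1$ directly (again via Lemma \ref{izero} and uniqueness), which gives $f+0=f$ in one step. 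For commutativity, the paper derives it \emph{after} constructing inverses, from uniqueness of inverses and the identity $-1_B\circ(f+g)=-f+(-g)$; you prove it directly with the swap isomorphism $\sigma$ and the invariance $\sigma\circ d=d$, $d'\circ\sigma=d'$. This is a real improvement in logical economy: it establishes the commutative monoid structure using only axioms 1 and 2, which is what the paper \emph{announces} it will do in the paragraph following Definition \ref{add} but does not quite deliver, since its commutativity argument leans on axiom 3. For associativity, both proofs ultimately rest on the associator of Lemma \ref{prod_assoc}; the paper's treatment is terse (``by a symmetric argument to the above''), while your reduction to an unbiased triple sum via the coherence identities $(d_A\oplus 1_A)\circ d_A=d^{(3)}_A$ and $d'_B\circ(d'_B\oplus 1_B)={d'}^{(3)}_B$ makes the bookkeeping explicit; you are right that this is where the only nontrivial isomorphism enters. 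The inverse construction $-f:=f\circ(-1_A)$ and the computation $f+(-f)=f\circ(1_A+(-1_A))=0$ coincide with the paper's (the paper's notation $(-1_A)\circ f$ is a typo for exactly the composite you use). Nothing in your outline would fail; the only items you should make sure to write out are the uniqueness checks for $\sigma\circ\iota_1=\iota_2$ (needed to get $d'\circ\sigma=d'$ from the coproduct property) and the compatibility of the associator with the triple diagonal and codiagonal.
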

\begin{proof}
The class $\hom(A, B)$ is closed under the operation defined by \eqref{op}, as
	categories are closed under composition and each of the maps in the
	composition exists for any two objects $A, B$ and any two morphisms $f, g:A
	\to B$. \\

 To show that composition distributes over addition, consider $(f+g)\circ h$ for $f,
	g: A\to B$, and $h:C\to A$.
		\[C\xto{h}A\xrightarrow{d}A\oplus A\xrightarrow{(f\oplus g)}B\oplus
		B\xrightarrow{d'}B\]
		By lemma \ref{ff} this is the same as 
		$$C\xrightarrow{d}C\oplus C\xto{(h\oplus h)}A\oplus
		A\xrightarrow{(f\oplus g)}B\oplus
		B\xrightarrow{d'}B$$
		Which is exactly
		$$C\xrightarrow{d}C\oplus C\xrightarrow{(f\circ h\oplus g\circ h)}B\oplus
		B\xrightarrow{d'}B$$
		Which is the definition of $f\circ h + g\circ h$. Thus $(f+g) \circ h =
		f\circ h+g\circ h$. 
Bilinearity in the other argument of the composition follows from a dual argument
invoking lemma \ref{ff'}.
	Next, suppose $f, g, h$ morphisms in $\hom(A, B)$. Consider the expression
	$(f+g) + h$, defined to be the composition:
		$$A\xto{d}A\oplus A\xto{(f+g)\oplus h}B\oplus B \xto{d'} B$$
		But $f+g$ is defined to be $d'\circ f\oplus g\circ d$, so we can rewrite
		as:
		$$A\xto{d}A\oplus A\xto{d\oplus 1_A}(A\oplus A)\oplus A \xto{(f\oplus
		g)\oplus h} (B\oplus B)\oplus B \xto {d', 1_B} B\oplus B \xto{d'} B$$
		But by lemma \ref{prod_assoc}, specifically the naturality of the
		isomorphism, this is equivalent to. 
		$$A\xto{d}A\oplus A\xto{1_A\oplus d}A\oplus (A\oplus A) \xto{f\oplus(
		g\oplus h)} B\oplus (B\oplus B) \xto {1_B, d'} B\oplus B \xto{d'} B$$
		Which is (by a symmetric argument to the above) equivalent to $f+(g+h)$,
		so + is associative.

	Now let $A$ be an object in $\mathscr{A}$. Consider the morphism $1_A\oplus
	0\circ d$. Note that, by lemma \ref{fg}, and the definition of $d$,
	$\pi_1\circ 1_A\oplus 0\circ d = 1_A$ and $\pi_2\circ 1_A\oplus 0 \circ d =
	0$. Moreover, by substituting $A$ into the definition for $A\oplus A$ with the
	maps $1_A$ and $0$, we obtain that this morphism must be unique. 
	But $\iota_1$ is another morphism with this property by lemma \ref{izero}, so $\iota_1 =
	1_A\oplus 0\circ d$. Symmetrically, $\iota_2=0\oplus 1_A\circ d$. Then
	$1_A+0 = d'\circ 1_A\oplus 0 \circ d = d' \circ \iota_1 = 1_A$, by
	definition. Symmetrically, $0+1_A = 1_A$. This together with distributivity 
	yield that $f+0=f$ for any $A, B$ and any $f: A\to B$, with 0 the unique
	zero morphism from $A$ to $B$. Associativity and identity give $\hom(A, B)$
	the structure of a monoid; it will suffice to show that this monoid
	contains inverses for each morphism, and that $+$ is commutative.

	Let $f:A\to B$ for arbitrary objects $A, B$, and consider $ -f \coloneq (-1_A)\circ f$.
	Then note that $f+(-f) = f\circ(1_A+(-1_A)) = f\circ 0 = 0$ by distributivity.
	This gives each $\hom$-class the structure of a group with operation
	$+$, which composition distributes over. Finally, consider that
	if $f, g \in \hom(A, B)$ we have $-1_B\circ(f+g) = -1_B\circ f+-1_B\circ g =
	-f+-g$, but also that $-1_B\circ (f+g)$ is the unique inverse of $f+g$,
	which can also be written as $-g+-f$. Thus $-g+-f = -f+-g$ for every $f, g$, and
	$\hom(A, B)$ is an abelian group.
\end{proof}
Perhaps not obviously, this the operation defined by \eqref{op} is the only
operation on morphisms satisfying these properties. We direct the reader to
Chapter 8.II of
\autocite{maclane} for a
detailed proof of this fact, recording the theorem for completeness.

\begin{theorem}\label{conv}
	Any addition of morphisms over which composition distributes is given by the
	addition defined in
	\eqref{op}.
\end{theorem}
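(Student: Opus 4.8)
The plan is to show that the single expression $d'_B \circ (f \oplus g) \circ d_A$ computes \emph{any} bilinear addition of $f$ and $g$. Since equation \eqref{op} \emph{defines} the canonical sum by exactly this expression, writing $\boxplus$ for an arbitrary addition on the hom-classes for which composition is bilinear, it suffices to prove $d'_B \circ (f\oplus g)\circ d_A = f \boxplus g$; combined with \eqref{op} this yields $f+g = f\boxplus g$. The whole argument is a decomposition of the categorically-defined morphism $f \oplus g$ (lemma \ref{fg}) in terms of $\boxplus$, and I note at the outset that $f\oplus g$, the diagonal $d$, and the codiagonal $d'$ are all defined purely by universal properties, independently of any addition, so no circularity arises.

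First I would pin down the neutral element of $\boxplus$. The hypothesis that composition distributes over $\boxplus$ says precisely that, for fixed $u$ and $v$, pre- and post-composition $(-)\circ u$ and $v\circ(-)$ are group homomorphisms between the relevant hom-groups. Since a group homomorphism preserves identities, and since $\hom(A, 0)$ is a one-element group (as $0$ is final), the unique map $u:A\to 0$ is the $\boxplus$-identity of $\hom(A,0)$; post-composing by the unique $v:0\to B$ then shows the $\boxplus$-identity of $\hom(A,B)$ is $v\circ u$, i.e. the categorical zero morphism $0_{A,B}$. Thus the identity of $\boxplus$ coincides with the zero morphism for every pair of objects. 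I also record the purely categorical identities $\pi_1\iota_1 = \pi_2\iota_2 = 1_B$ and $\pi_1\iota_2 = \pi_2\iota_1 = 0$ coming from \eqref{prod_ident} and lemma \ref{izero}.

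The key step is the identity $\iota_1\pi_1 \boxplus \iota_2\pi_2 = 1_{B\oplus B}$. To prove it I would compose with the projections and distribute: $\pi_1\circ(\iota_1\pi_1\boxplus\iota_2\pi_2) = (\pi_1\iota_1)\pi_1 \boxplus (\pi_1\iota_2)\pi_2 = \pi_1 \boxplus 0 = \pi_1$, using the identities above together with the fact that the zero morphism is the $\boxplus$-identity, and symmetrically for $\pi_2$. Since $1_{B\oplus B}$ has these same composites with both projections, the universal property of the product $B\oplus B$ forces $\iota_1\pi_1\boxplus\iota_2\pi_2 = 1_{B\oplus B}$. Pre-composing this with $f\oplus g$ and using $\pi_1(f\oplus g)=f\circ p_1$ and $\pi_2(f\oplus g)=g\circ p_2$ from lemma \ref{fg} gives the decomposition $f\oplus g = (\iota_1\circ f\circ p_1)\boxplus(\iota_2\circ g\circ p_2)$.

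Finally I would substitute this decomposition into $d'\circ(f\oplus g)\circ d$ and distribute on both sides, obtaining $(d'\iota_1\, f\, p_1 d)\boxplus(d'\iota_2\, g\, p_2 d)$. The codiagonal relations $d'\iota_1=d'\iota_2=1_B$ and the diagonal relations $p_1 d = p_2 d = 1_A$ collapse each term, leaving exactly $f\boxplus g$; hence $f\boxplus g = d'\circ(f\oplus g)\circ d = f+g$. I expect the main obstacle to be that first step, correctly identifying the abstract $\boxplus$-identity with the categorical zero morphism, since this is where the hypotheses on $\boxplus$ (that it is a genuine group operation, not merely a unital operation) and the special role of the zero object must be used with care; everything afterward is a formal manipulation driven entirely by bilinearity.
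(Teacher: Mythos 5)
Your argument is correct, and it is worth noting that the paper itself supplies no proof of Theorem \ref{conv} at all: it simply records the statement and defers to Chapter 8.II of Mac Lane. What you have written is, in essence, that omitted argument, so rather than diverging from the paper you are filling a deliberate gap in it. The structure is sound: identifying the $\boxplus$-identity with the categorical zero via the trivial group $\hom(A,0)$ and the homomorphisms given by pre-/post-composition; establishing the crux identity $\iota_1\pi_1\boxplus\iota_2\pi_2=1_{B\oplus B}$ from the universal property of the product together with \eqref{prod_ident} and Lemma \ref{izero}; decomposing $f\oplus g$ as $\iota_1fp_1\boxplus\iota_2gp_2$; and collapsing $d'\circ(f\oplus g)\circ d$ term by term using the diagonal and codiagonal relations. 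You are also right to flag the one place where care is needed: the claim that a homomorphic image of the identity is the identity uses that $\boxplus$ makes the hom-classes into genuine groups (so that the idempotent $\phi(e)\boxplus\phi(e)=\phi(e)$ forces $\phi(e)$ to be the neutral element); the theorem as stated says only "addition," so you should make explicit that you read this as an abelian-group structure, which is the standard hypothesis and the one under which the uniqueness claim is actually true. With that reading fixed, the proof is complete and could be inserted into the paper as written.
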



\subsubsection{Additive Functors}
As we have now fully defined a particular type of categories, the necessary
rhythm of a mathematics text dictates we now define a corresponding type of
functors. For us, these functors take the form of \textbf{additive functors},
and (as we will see) induce group homomorphisms on the morphisms in an additive
category.
\begin{definition}
	An \textbf{additive functor} is one which preserves biproducts.
\end{definition}
This somewhat minimal definition contains all the information we require to
construct a group homomorphism; intuitively, since we constructed our group
operation using only the biproduct, this is reasonable. 

\begin{lemma}\label{adconv}
	Consider additive categories $\mathscr A$ and $\mathscr B$ and functor
	$F:\mathscr A\to \mathscr B$. For morphisms $f$ and $g$, $F(f+g) =
	F(f)+F(g)$ if and only if $F$ is an additive functor.
\end{lemma}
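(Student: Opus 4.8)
The plan is to prove both implications by exploiting the fact, visible in \eqref{op}, that the operation $+$ is assembled entirely out of biproduct data: the diagonal $d$, the morphism $f \oplus g$ of Lemma \ref{fg}, and the codiagonal $d'$. Preserving biproducts should therefore be exactly strong enough to preserve $+$, and conversely preserving $+$ should be enough to detect biproducts, provided I can phrase biproduct-hood in terms of the additive structure alone.

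For the forward direction (additive $\Rightarrow$ $F(f+g) = F(f)+F(g)$), I would argue that $F$ carries each ingredient of \eqref{op} to its counterpart in $\mathscr B$. Since $F$ preserves biproducts, $F(A \oplus A)$ together with $F\pi_i$ and $F\iota_i$ is a biproduct of $FA$ with itself, which I use as the ambient biproduct for computing sums in $\mathscr B$; this is legitimate because Theorem \ref{conv} guarantees the resulting addition is the unique one over which composition distributes, hence independent of the chosen biproduct. Then $Fd$ satisfies $F\pi_i \circ Fd = F(\pi_i \circ d) = F(1_A) = 1_{FA}$, so by the uniqueness clause defining the diagonal, $Fd = d_{FA}$; an identical uniqueness argument through Lemma \ref{fg} gives $F(f \oplus g) = Ff \oplus Fg$, and the dual argument gives $Fd' = d'_{FB}$. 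Applying $F$ to \eqref{op} and using functoriality of composition then collapses $F(f+g)$ to $d'_{FB} \circ (Ff \oplus Fg) \circ d_{FA} = Ff + Fg$.

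For the reverse direction I would pass through an internal, equational description of biproducts. First I would record that any biproduct $C = A \oplus B$ satisfies $\pi_1\iota_1 = 1_A$ and $\pi_2\iota_2 = 1_B$ by \eqref{prod_ident}, $\pi_1\iota_2 = \pi_2\iota_1 = 0$ by Lemma \ref{izero}, and $\iota_1\pi_1 + \iota_2\pi_2 = 1_C$; this last identity I would obtain by composing both candidate maps with $\pi_1$ and $\pi_2$, simplifying via distributivity of composition over $+$ and the relations above, and invoking uniqueness in the universal property of the product. Conversely these same equations force $C$ to be a biproduct: given $f : D \to A$ and $g : D \to B$, the map $\iota_1 f + \iota_2 g$ mediates into the product (existence from the $\pi_i\iota_j$ relations, uniqueness by left-multiplying an arbitrary mediating map by $1_C = \iota_1\pi_1 + \iota_2\pi_2$), and dually $C$ is a coproduct. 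Now supposing $F(f+g) = F(f)+F(g)$, I note that $F$ preserves zero morphisms, since $F(0)+F(0) = F(0+0) = F(0)$ forces $F(0)=0$ in the group $\hom(FA,FB)$. Applying $F$ to the five equations, and using that $F$ respects composition, identities, zeros, and sums, shows $F(A\oplus B)$ with $F\pi_i, F\iota_i$ satisfies the same equational description in $\mathscr B$, hence is a biproduct; so $F$ is additive.

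The main obstacle I anticipate is the reverse direction, and within it the equational characterization of biproducts: it is precisely the device that lets biproduct-hood be transported across $F$ using only preservation of composition and addition, yet neither half of it is stated in the excerpt, so both the forward and backward equivalences must be supplied. The forward direction, by contrast, is essentially bookkeeping once one observes that every constituent map of \eqref{op} is pinned down by a uniqueness property that a biproduct-preserving functor automatically respects.
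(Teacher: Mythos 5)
Your proposal is correct and follows essentially the same route as the paper: the forward direction transports each constituent of \eqref{op} through $F$ via the uniqueness clauses defining the diagonal, $f\oplus g$, and codiagonal, and the reverse direction verifies that $F(A\oplus B)$ with $F(\pi_i)$, $F(\iota_i)$ satisfies the biproduct axioms using the relations $\pi_i\circ\iota_i=1$ and $\pi_1\circ\iota_2=\pi_2\circ\iota_1=0$ together with mediating maps of the form $F(\iota_1)\circ f_1+F(\iota_2)\circ f_2$. Your version is in fact slightly more complete than the paper's, which neither records that $F(0)=0$ (your observation $F(0)=F(0+0)=F(0)+F(0)$) nor verifies uniqueness of the mediating morphism, a gap your identity $\iota_1\circ\pi_1+\iota_2\circ\pi_2=1_C$ closes.
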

\begin{proof}
	Consider $F(f+g) = F(d'\circ (f\oplus g)\circ d) = F(d')\circ F(f\oplus g)
	\circ F(d)$. But since $F(A\oplus A) = F(A)\oplus F(A)$, we must have that
	$F(d') = \hat{d'}$ where $\hat{d'}:F(B)\to F(B)\oplus F(B)$ is the
	appropriate codiagonal, as if $\hat\iota_i$ is the $i$th projection off the
	biproduct $F(B)\oplus F(B)$, we have that $\hat{d'}\circ
	\hat{\iota_1} =F(d')\circ F(\iota_1) =  F(d'\circ \iota_1) = 1_B$, and
	similarly $\hat{d'}\circ
	\hat{\iota_2} =F(d')\circ F(\iota_2) =  F(d'\circ \iota_2) = 1_B$. Dually,
	$F(d)$ is the diagonal map. Finally, a similar argument says $F(f\oplus
	g):A\oplus A\to B\oplus B$ is a morphism with $F(f\oplus g)\circ
	\hat{\iota_i} = F(f\oplus g)\circ F(\iota_i) = F(f\oplus g\circ \iota_i) =
	F(f)$ or $F(g)$ for $i=1, 2$ respectively. This means $F(f\oplus g) =
	F(f)\oplus F(g)$, so $F(f+g) =F(d')\circ F(f\oplus g) \circ F(d) =
	\hat{d'}\circ F(f)\oplus F(g)\circ\hat{d} = F(f) +  F(g)$, as intended.

	Conversely, suppose $F(f+g) = F(f) + F(g)$ for any suitable morphisms $f, g$. Consider
	$F(A\oplus B)$ with any $C$ projecting into $F(A)$ and $F(B)$
	via $f_1$ and $f_2$. Clearly $F(\pi_1)$ and $F(\pi_2)$ are morphisms from
	$F(A\oplus B)$ to $F(A)$ and $F(B)$ respectively. 
	Then $ F(\pi_1)\circ
	 F(	\iota_1)\circ f_1 + 
	F(\pi_1)\circ
	 F(	\iota_2)\circ f_2 =  
	 F(	\pi_1\circ\iota_1)\circ f_1
	+F(	\pi_1\circ\iota_2)\circ f_2=F(1)\circ f_1 + F(0)\circ f_2 =
	F(0)\circ f_1 = F(f_1)$. A symmetric argument holds for $f_2$, and a dual
	argument holds for inclusions. Finally, $F(\pi_i)\circ F(\iota_i) =
	F(\pi_i\circ\iota_i) = F(1) = 1$, so $F(A\oplus B)$ is a biproduct of $F(A)$
	and $F(B)$ with
	projections $F(\pi_1)$ and $F(\pi_2)$ and inclusions $F(\iota_1)$ and
	$F(\iota_2)$. Thus $F(A\oplus B)$ is a biproduct of $F(A)$ and $F(B)$, and
	so $F$ preserves biproducts.  
\end{proof}

\subsection{Abelian Categories}
We extend the notion of an additive category to capture more of the information
we require from an abelian group. In particular, we begin by extending the
notion of a kernel of a morphism. In a group, we are blessed with the ability to
consider the kernel as a subset of the domain of a morphism; however, in a
general category we have no such luxury. Indeed, the only way to define a kernel
elegantly is by specifying it through a universal property pertaining to a
certain morphism. The abstract nature of this construction, however, is rewarded
by the ability to dualize it to construct a \textbf{cokernel}. We will come to
understand that this cokernel encodes some intrinsic information about
quotient objects, allowing for a very neat extension of many set-theoretic
statements of algebra.
\begin{definition}
	Consider a morphism $f:A\to B$ in a category
	$\mathscr A$ with
	a zero object. A \textbf{kernel} $\ker f: \Ker f\to A$ of $f$, if it exists, is a
	morphism such that $f\circ \ker f = 0$ and
	every morphism $g$ with $f\circ g = 0$ factors uniquely as $\ker f\circ a$.
	A \textbf{cokernel} $\cok f:B\to \Cok f$, if it exists, is a morphism with $\cok f\circ
	f = 0$, and for any other $g$ with $g\circ f = 0$, $g$ factors uniquely as
	$a\circ \cok f$
\end{definition}

For now, we refer to "a kernel"; more development is necessary to
demonstrate that it is unique up to isomorphism. 
It is important to note that we associate the object $\Ker f$ with any kernel 
$\ker f$, so that the "kernel" we generally consider is effectively a pair $(\ker f, \Ker f)$.
Of course, the expression $\ker f$ alone captures this information; the source and
target of each morphism is data given in the morphism itself. 

To fully extend the notion of a group, we will
need analogous qualifiers to "injective" and "surjective". These will allow us
to translate theorems proven in the language of sets into the language of
categories. These come in the
form of the terms \textbf{monic} and \textbf{epi}:
\begin{definition}
	A morphism $m:A\to B$ in a category $\mathscr A$ is \textbf{monic} if for any two
	morphisms $f, g:A\to B$, whenever $m\circ f = m\circ g$ we can deduce that
	$f = g$ - that is, when $m$ can be "canceled on the left". Similarly, a
	morphism $e:A\to B$ in $\mathscr A$ is \textbf{epi} if for $f, g:A\to B$ we
	have $f\circ e = g\circ e\implies f=g$, that is, $e$ can be canceled on the
	right.
\end{definition}
When a category has a zero object, there is an alternate, equivalent definition
of monics and epis in terms of which morphisms they send to zero. Aside from
providing useful insight into how monics and epis function, this equivalent
definition is often useful when working with kernels and cokernels, and will
ultimately allow use the notion of an "exact sequence" to fully characterize
monics and epis. Furthermore, this lemma continues to demonstrate that monics generalize
injective homomorphisms - a homomorphism is injective if and only if it sends only the zero object
to zero. 
\begin{lemma}\label{monica}
	In an additive category, a morphism $m$ is monic if and only if $m\circ f =
	0\implies f = 0$, and a morphism $e$ is epi if and only if $f\circ e = 0
	\implies f = 0$.
\end{lemma}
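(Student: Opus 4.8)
The plan is to prove each biconditional by handling its two directions separately, exploiting the fact that in an additive category the hom-classes carry an abelian group structure and that composition distributes over $+$ (just established in the preceding theorem). I would prove the statement for monics in full and then obtain the claim for epis by the dual argument, reversing all arrows and cancelling on the right throughout.

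For the forward direction, suppose $m$ is monic and that $m\circ f = 0$ for some morphism $f$. The key observation is that $m\circ 0 = 0$: a morphism factoring through the zero object, when composed with any morphism, still factors through the zero object and so is the zero morphism (as in lemma \ref{zero}). Hence $m\circ f = m\circ 0$, and cancelling $m$ on the left gives $f = 0$, as desired.

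For the converse, assume $m\circ f = 0 \implies f = 0$ for every $f$, and take any $f, g$ with $m\circ f = m\circ g$. This is where the additive structure does the real work: since $\hom(C, A)$ is an abelian group, the difference $f - g$ is a legitimate morphism, and distributivity of composition over $+$ yields $m\circ(f - g) = m\circ f - m\circ g = 0$. The hypothesis then forces $f - g = 0$, that is, $f = g$, so $m$ is monic. The epi statement is entirely dual, using $0\circ e = 0$ for the forward implication and $(f - g)\circ e = f\circ e - g\circ e$ for the converse.

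I do not expect any single computational step to be an obstacle; the substance of the lemma is structural rather than technical. The point worth flagging is that the two \emph{reverse} implications genuinely depend on the abelian-group structure and on distributivity — in a bare category equipped only with a zero object, the condition ``sends only zero to zero'' is strictly weaker than being monic, so the content of this lemma is precisely what the additive axioms purchase us. Accordingly, I would take care to invoke distributivity and the existence of additive inverses explicitly at exactly the moment they are used, so the reliance on the additive hypotheses is transparent.
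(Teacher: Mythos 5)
Your proposal is correct and follows essentially the same route as the paper's own proof: the forward direction by comparing $m\circ f$ with $m\circ 0$ and cancelling, and the converse by forming the difference $f-g$ and invoking distributivity of composition over $+$, with the epi case handled dually. Your explicit remark that the converse genuinely requires the additive structure is a welcome clarification but does not alter the argument.
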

\begin{proof}
	First suppose $m$ is monic. Then $m\circ f = 0 = m\circ 0\implies f = 0$ by
	definition of monic. Conversely, suppose $m$ has the property $m\circ f =
	0\implies f=0$ for any suitably composable $f$. Then consider $g, h$ with
	$m\circ h = m\circ g$. Let $f=h-g$. Then $m\circ h - m\circ g = 0$ by the
	definition of $-m\circ g$. However, $ 0 = m\circ h - m\circ g = m\circ(h-g)$
	(by bilenarity). Our assumption then implies
	that $h-g=0$. Then $h=g$ by adding $g$ to both sides, and $m$ is
	monic. A symmetric argument holds for epis.
\end{proof}

\begin{corollary}\label{monicaa}
	A morphism $m$ is monic if and only if every kernel of it is zero. A morphism $e$ is
	epi if and only if every cokernel of it is zero.
\end{corollary}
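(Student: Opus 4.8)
The plan is to route everything through Lemma \ref{monica}, which already converts ``monic'' into the cancellation condition $m\circ f = 0 \implies f = 0$ and ``epi'' into $f\circ e = 0 \implies f = 0$. With that translation in hand, the corollary becomes a restatement of these conditions in the language of kernels and cokernels, so the work is entirely in matching the universal properties to the cancellation conditions. I will carry out the kernel/monic half in full and obtain the cokernel/epi half by dualizing.

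For the forward direction, suppose $m$ is monic and let $k = \ker m : \Ker m \to A$ be any kernel. The defining property of a kernel gives $m\circ k = 0$, and since $m$ is monic, Lemma \ref{monica} immediately yields $k = 0$. As $k$ was an arbitrary kernel, every kernel of $m$ is the zero morphism. For the converse, suppose every kernel of $m$ is zero and fix such a (zero) kernel $k = \ker m$. Given any $f$ with $m\circ f = 0$, the universal property of the kernel factors $f$ as $f = k\circ a$; since $k = 0$ this forces $f = 0\circ a = 0$. Hence $m\circ f = 0 \implies f = 0$, and Lemma \ref{monica} gives that $m$ is monic.

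The statements for epis and cokernels follow by the dual argument: one replaces the kernel's universal property (factoring maps into the source of $m$) with the cokernel's (factoring maps out of the target of $e$) and invokes the epi half of Lemma \ref{monica} in place of the monic half. Because the earlier development has been careful to record every construction together with its dual, no new ideas are needed here beyond a mechanical reversal of arrows.

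The one point requiring care, rather than a genuine obstacle, is the existence quantifier implicit in ``every kernel.'' The forward direction only shows that \emph{if} a kernel exists it must be the zero morphism, whereas the converse genuinely needs a kernel to exist in order to run the factorization. In the ambient abelian setting, where kernels and cokernels are guaranteed, this is automatic; I will therefore simply fix one representative $\ker m$ and note that the universal property makes the factorization argument independent of which kernel is chosen, so the choice of representative is immaterial.
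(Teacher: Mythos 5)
Your proof is correct and follows essentially the same route as the paper's: both directions are routed through Lemma \ref{monica}, with the forward direction applying it to the kernel morphism itself and the converse using the factorization of any $f$ with $m\circ f=0$ through the zero kernel. Your added remark about the existence of kernels being needed for the converse is a fair observation that the paper leaves implicit, but it does not change the argument.
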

\begin{proof}
	Let $m$ be a monic. Clearly, $m\circ 0 = 0$; it will suffice to show that no
	nonzero morphism $g$ satisfies $m\circ g = 0$, but this follows from lemma
	\ref{monica}. Conversely, suppose $m$ has kernel 0. Then any morphism $f$
	with $m\circ f = 0$ factors through 0, and is therefore 0 by lemma
	\ref{monica}. A dual argument proves the lemma for epis.
\end{proof}
\begin{corollary}
	\label{isoker}
	The kernel and cokernel of every isomorphism is 0.
\end{corollary}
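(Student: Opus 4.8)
The plan is to reduce the statement entirely to the characterization of monics and epis recorded in Corollary \ref{monicaa}, which asserts that a morphism is monic exactly when every kernel of it is zero and epi exactly when every cokernel of it is zero. Consequently, it suffices to prove the single fact that every isomorphism is \emph{both} monic and epi; the claims about its kernel and cokernel then drop out immediately. This is the cleaner route than arguing about kernels directly, because it lets us reuse machinery already in hand rather than verifying a universal property from scratch.

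First I would fix an isomorphism $f : A \to B$ and recall that, by definition, it carries a two-sided inverse $f^{-1} : B \to A$ satisfying $f^{-1} \circ f = 1_A$ and $f \circ f^{-1} = 1_B$. To establish that $f$ is monic I would appeal to the zero-morphism characterization of Lemma \ref{monica}, so that it is enough to show $f \circ g = 0$ forces $g = 0$. Composing on the left with $f^{-1}$ gives $g = 1_A \circ g = f^{-1} \circ f \circ g = f^{-1} \circ 0$, and since $f^{-1} \circ 0$ still factors through the zero object it is again a zero morphism; hence $g = 0$ and $f$ is monic. The proof that $f$ is epi is exactly dual: if $g \circ f = 0$, then composing on the right with $f^{-1}$ yields $g = g \circ f \circ f^{-1} = 0 \circ f^{-1} = 0$, so by the epi half of Lemma \ref{monica}, $f$ is epi.

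Having shown $f$ to be simultaneously monic and epi, I would invoke Corollary \ref{monicaa} twice: the monic property gives that every kernel of $f$ is zero, and the epi property gives that every cokernel of $f$ is zero, which is precisely the assertion.

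I do not expect a serious obstacle; the only point deserving care is the interface with the existence clause built into the definitions of kernel and cokernel, since Corollary \ref{monicaa} speaks of \emph{every} kernel rather than asserting one exists. To close this gap I would add the remark that the zero object supplies an honest kernel: the morphism $0 \to A$ satisfies $f \circ 0 = 0$, and any $g : C \to A$ with $f \circ g = 0$ is forced to vanish by the monic argument above, whence it factors uniquely through the zero object. This confirms that the kernel genuinely exists and equals $0$, and the dual remark handles the cokernel.
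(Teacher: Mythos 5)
Your proof is correct and follows exactly the route the paper intends: the statement is left unproved as an immediate corollary of Corollary \ref{monicaa}, and deducing it by showing an isomorphism is both monic and epi via its two-sided inverse (through Lemma \ref{monica}) is the expected argument. Your closing remark about existence of the kernel is a worthwhile addition, since Lemma \ref{monica} and Corollary \ref{monicaa} are stated for additive categories where kernels are not guaranteed to exist.
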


As we are working with kernels and cokernels often, it is useful to manipulate
them, almost algebraically. We will use the following lemma frequently to assist
in translation of theorems from set-theoretic kernels to categorical kernels. Intuitively,
it establishes that every kernel of $f$ "includes" into the source of $f$ by way
of our definition of monics;
remarkably, this is given exclusively by the universal property of kernels.
A powerful element of our
definition for abelian categories will be inverting this conditional, so that
every monic is indeed the kernel of some morphism, and every epi a cokernel.

\begin{lemma}\label{kermon}
	In an additive category, every kernel is monic, and every cokernel is epi.
\end{lemma}
\begin{proof}
	Let $f:A\to B$ be a morphism in an additive category $\mathscr A$,
	and let $f$ have a kernel $k:K\to A$. Suppose $k$ is not monic. Then there
	exist $g$ and $g'$ such that $k\circ g = k\circ g'$ with $g\neq g'$.
	However, $f\circ (k\circ g) = (f\circ k)\circ g = 0\circ g=0$  and $f\circ
	(k\circ g') = (f\circ k)\circ g' = 0\circ g' = 0$, so $k\circ g=k\circ g'$
	is a morphism which sends $g$ to zero and which factors through $k$ in two
	ways, contradicting the definition of a kernel. A symmetric argument holds
	for cokernels.
\end{proof}

The following lemma ensures that it is meaningful to refer to "the kernel" of a
morphism, at least up to isomorphism. Although we have been referring to the
unique kernel till now, we have not used it's uniqueness in any proof (so this
argument is not circular). 

\begin{lemma}\label{unique}
	The kernel and cokernel are unique up to isomorphism.
\end{lemma}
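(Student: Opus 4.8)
The plan is to exploit the universal property of kernels directly, treating the uniqueness as a standard consequence of the fact that any object defined by a universal property is unique up to unique isomorphism. Suppose $k:K\to A$ and $k':K'\to A$ are both kernels of a morphism $f:A\to B$. The strategy is to use each kernel's universal property against the other to produce mutually inverse comparison morphisms between $K$ and $K'$.

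First I would observe that since $k$ is a kernel of $f$, we have $f\circ k=0$; in particular $k$ is a morphism satisfying $f\circ k=0$, so by the universal property of the kernel $k'$, the morphism $k$ factors uniquely as $k=k'\circ a$ for some $a:K\to K'$. Symmetrically, since $f\circ k'=0$, the morphism $k'$ factors uniquely through $k$ as $k'=k\circ b$ for some $b:K'\to K$. The heart of the argument is then to show that $a$ and $b$ are mutually inverse. Substituting, I would compute $k=k'\circ a=(k\circ b)\circ a=k\circ(b\circ a)$, so $k=k\circ(b\circ a)$. Since $k$ is monic by Lemma \ref{kermon}, I can cancel $k$ on the left to conclude $b\circ a=1_K$. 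A symmetric computation with $k'=k\circ b=k'\circ(a\circ b)$, using that $k'$ is also monic, yields $a\circ b=1_{K'}$. Hence $a$ is an isomorphism with inverse $b$, and $K\cong K'$ compatibly with the kernel maps.

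The argument for cokernels is entirely dual: given two cokernels $c:B\to C$ and $c':B\to C'$ of $f$, the relations $c\circ f=0$ and $c'\circ f=0$ let each factor through the other, producing $a:C\to C'$ and $b:C'\to C$ with $c'=a\circ c$ and $c=b\circ c'$; then $c=b\circ c'=b\circ a\circ c$, and cancelling $c$ on the right (using that $c$ is epi by Lemma \ref{kermon}) gives $b\circ a=1_C$, with the symmetric computation giving $a\circ b=1_{C'}$.

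I do not anticipate a serious obstacle here, as the result is a routine instance of uniqueness from a universal property; the only point requiring care is that cancellation of $k$ (respectively $c$) is justified, which is precisely why the monic/epi property established in Lemma \ref{kermon} is invoked rather than merely the factorization uniqueness clause. One could alternatively run the whole argument using the uniqueness clause of the universal property directly—noting that both $1_K$ and $b\circ a$ are factorizations of $k$ through $k$—but leaning on Lemma \ref{kermon} keeps the exposition uniform with the surrounding algebraic manipulations of kernels.
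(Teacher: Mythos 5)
Your proof is correct and follows essentially the same route as the paper's: obtain the two comparison morphisms from the universal property, substitute, and cancel using the fact that kernels are monic (Lemma \ref{kermon}), with the cokernel case handled dually. Your write-up is in fact slightly cleaner, since the paper's version has a notational slip (reusing $i$ for both comparison maps before introducing $i'$).
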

\begin{proof}
	Suppose there are two kernels of a morphism $f$, $k$ and $k'$. Then $k$ must
	factor uniquely through $k'$ as $k = k'\circ i$, and $k'$ must factor
	uniquely through $k$ as $k' = k\circ i$. Substitution gives $k' = k'\circ
	i\circ i'$. But the kernel $k'$ is monic by \ref{kermon}, so we can cancel
	to obtain $i\circ i' = 1$. A symmetric argument shows the symmetric
	composition is the identity, so $i$ is an isomorphism. Thus the two kernels
	differ only by a factor of an isomorphism. A dual argument
	proves that the cokernel is unique up to isomorphism.
\end{proof}
\begin{corollary}
	The object $\Ker f$ and $\Cok f$ are unique up to isomorphism.
\end{corollary}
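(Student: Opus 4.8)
The plan is to read the corollary directly off Lemma \ref{unique}, since the object-level statement is already latent in the morphism-level one. Recall that, by our convention, a kernel of $f$ is a pair consisting of a morphism $\ker f$ together with its source object $\Ker f$. Thus, given two kernels $k : K \to A$ and $k' : K' \to A$ of the same morphism $f$, the objects whose uniqueness we must establish are precisely the sources $K$ and $K'$, each of which may be written $\Ker f$.

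First I would invoke the isomorphism already produced in the proof of Lemma \ref{unique}. There we obtained, from the unique factorization of $k$ through $k'$, a morphism $i : K \to K'$, together with a morphism $i' : K' \to K$ arising from the reverse factorization, and we showed $i \circ i' = 1_{K'}$ and $i' \circ i = 1_K$ by cancelling the monic kernel (Lemma \ref{kermon}). This $i$ is therefore an isomorphism whose source and target are exactly the two candidate objects $\Ker f$. Hence any two such objects are isomorphic, which is the asserted uniqueness up to isomorphism for $\Ker f$.

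The statement for $\Cok f$ then follows by the dual argument, using the epi property of cokernels in place of the monic property of kernels, exactly as in the dual half of Lemma \ref{unique}.

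Since the entire content has already been assembled in the preceding lemma, I do not anticipate a genuine obstacle. The only point requiring any care is the bookkeeping observation that the isomorphism $i$ of \emph{morphisms} furnished by Lemma \ref{unique} is, in particular, an isomorphism of their underlying source \emph{objects} — which is immediate once one unwinds that $\Ker f$ is by definition the source of $\ker f$.
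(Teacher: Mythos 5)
Your proposal is correct and matches the paper's (implicit) argument: the corollary is stated without proof precisely because the isomorphism $i$ constructed in Lemma \ref{unique} between two kernels $k:K\to A$ and $k':K'\to A$ is already a morphism between the underlying objects, so the uniqueness of $\Ker f$ (and dually $\Cok f$) up to isomorphism is immediate. Your bookkeeping remark that a kernel is a pair $(\ker f, \Ker f)$ is exactly the unwinding the paper intends.
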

We can now define the main construction of this chapter, the \textbf{abelian
category}. This category will allow us to pursue variants of many proofs
designed for abelian groups, rings, fields, modules, vector spaces, and so
on. 

\begin{definition}
	An \textbf{abelian category} is an additive category $\mathscr A$ which satisfies the following criterion:
	\begin{enumerate}
		\item Every morphism in $\mathscr A$ has a kernel and a cokernel. 
		\item Every monic is a kernel, and every epi a cokernel.
	\end{enumerate}
\end{definition}

We will see that condition 2 effectively encodes the first isomorphism theorem into our
definition of abelian categories, by replacing subgroups with monics and
quotient groups with cokernels and epis. 

\begin{definition}
	A subobject is a monic; a quotient object, an epi.
\end{definition}

We often identify the subobject with it's source, and the quotient object with
it's target; this leads to the more intuitive notion of a subobject "including"
into an object. Furthermore, we often equate two subobjects which differ by a
factor of an isomorphism. There are times, however, when we wish to take the
quotient of two subobjects; for
this we will need the following definition. Note that the relation "factors
through" takes the place of "is a subobject of" in this definition; this is an
important theme that will recur. 

\begin{definition}
	If a monic $a$ factors through a monic $b$ as $a = b\circ f$, define the \textbf{quotient
	of subobjects} $b / a$ to be the quotient object $\cok f$.  
\end{definition}

e have established that factoring is an important relation; we now proceed to
prove a plethora of results allowing us to easily and conveniently work with
this relation. Firstly, we present a factorization lemma motivating our
definition of "image", which will succeed the lemma.

\begin{lemma}\label{factoring}
	Given a morphism $f:A\to B$ in an abelian category which factors as $m\circ
	e$ for a monic $m$ and epi $e$, then $m$ must factor through $\ker(\cok f)$ and $e$ must
	factor through $\cok(\ker f)$. Furthermore, any morphism which factors as
	$g = f\circ h$ factors through $\ker (\cok f)$ and $\cok(\ker h)$.
\end{lemma}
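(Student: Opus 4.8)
The plan is to reduce each of the four ``factors through'' claims to a single application of a universal property, by checking in each case that the appropriate composite vanishes. Recall that to say a morphism factors through $\ker(\cok f)$ is, by the universal property of the kernel, exactly to say that composing it with $\cok f$ yields zero; dually, to factor through a cokernel $\cok(\ker f)$ is exactly to vanish upon precomposition with $\ker f$. The zero conditions will follow from the defining equations $\cok f\circ f = 0$ and $f\circ \ker f = 0$, combined with the left-cancellability of monics and right-cancellability of epis (Lemma \ref{monica} and the definition of monic/epi).

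First I would handle the two statements about the factorization $f = m\circ e$. To show $m$ factors through $\ker(\cok f)$, it suffices to verify $\cok f\circ m = 0$: starting from $\cok f\circ f = 0$ and substituting $f = m\circ e$ gives $\cok f\circ m\circ e = 0$, and since $e$ is epi we cancel it on the right to obtain $\cok f\circ m = 0$; the universal property of $\ker(\cok f)$ then supplies the factoring morphism. The claim that $e$ factors through $\cok(\ker f)$ is precisely dual: from $f\circ \ker f = 0$ and $f = m\circ e$ we get $m\circ e\circ \ker f = 0$, and since $m$ is monic we cancel on the left to obtain $e\circ \ker f = 0$, so the universal property of $\cok(\ker f)$ applies.

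For the ``furthermore'' clause, write $g = f\circ h$ with $h:C\to A$. That $g$ factors through $\ker(\cok f)$ is immediate, since $\cok f\circ g = \cok f\circ f\circ h = 0$; no cancellation and no use of the $m$--$e$ factorization is needed. Dually, $g$ factors through $\cok(\ker h)$ because $g\circ \ker h = f\circ h\circ \ker h = f\circ 0 = 0$, using $h\circ \ker h = 0$. The argument throughout is essentially bookkeeping, so the only real care required lies in matching each claim to the correct universal property and cancelling on the correct side; the subtlest point---hardly an obstacle---is recognizing that the two halves of the first assertion are genuinely dual (one cancelling an epi on the right, the other a monic on the left), and that the ``furthermore'' clause relies only on the defining equations of $\cok f$ and $\ker h$, not on the monic--epi factorization at all.
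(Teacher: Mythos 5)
Your proposal is correct and follows essentially the same route as the paper: verify the relevant composite is zero (cancelling the epi $e$ on the right, respectively the monic $m$ on the left, via Lemma \ref{monica}), then invoke the universal property of the kernel or cokernel, with the ``furthermore'' clause needing no cancellation at all. The only cosmetic difference is that you spell out the dual halves that the paper dispatches as ``symmetric arguments.''
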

\begin{proof}
	Consider $\cok f\circ m\circ e = 0$. By lemma $\ref{monica}$, this means
	that $\cok f\circ m = 0$, which means that $m$ must factor through $\ker
	(\cok f)$. A symmetric argument shows $e$ factors through $\cok (\ker f)$.
	Suppose another morphism $g = f\circ h$; then $\cok f \circ g = \cok f\circ
	f\circ h = 0\circ h = 0$ so $g$ must factor through $\ker(\cok f)$. A
	symmetric argument holds for the symmetric case.
\end{proof}
\begin{corollary}
	Any morphism $f$ in an abelian category factors through both $\ker(\cok f) $
	and $\cok (\ker f)$
\end{corollary}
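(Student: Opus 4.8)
The plan is to read this off directly from the defining universal properties of the kernel and cokernel, since both factorizations are really just restatements of the identities $\cok f\circ f = 0$ and $f\circ \ker f = 0$ that hold by definition. Nothing beyond Lemma \ref{factoring} (applied to the trivial factorizations $f = f\circ 1_A$ and $f = 1_B\circ f$) is needed, so I expect this to be a short deduction rather than a genuinely new argument.

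First I would handle the factorization through $\ker(\cok f)$. By definition of the cokernel we have $\cok f\circ f = 0$. Since $\ker(\cok f):\Ker(\cok f)\to B$ is by definition the kernel of $\cok f$, its universal property says that every morphism $g$ with $\cok f\circ g = 0$ factors uniquely as $g = \ker(\cok f)\circ a$. Taking $g = f$ gives the desired factorization of $f$ through $\ker(\cok f)$.

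Dually, I would handle the factorization through $\cok(\ker f)$. By definition of the kernel we have $f\circ \ker f = 0$. Since $\cok(\ker f):A\to \Cok(\ker f)$ is the cokernel of $\ker f$, its universal property says that every morphism $g$ with $g\circ \ker f = 0$ factors uniquely as $g = a\circ \cok(\ker f)$. Taking $g = f$ yields the factorization of $f$ through $\cok(\ker f)$. These two halves are exactly the content of the ``furthermore'' clause of Lemma \ref{factoring} and its dual, instantiated at $h = 1_A$ and $h = 1_B$ respectively.

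The only point requiring care --- hardly an obstacle --- is bookkeeping about the side on which ``factors through'' acts: $f$ factors through $\ker(\cok f)$ on the \emph{left} (as $\ker(\cok f)\circ a$), whereas it factors through $\cok(\ker f)$ on the \emph{right} (as $a\circ \cok(\ker f)$), reflecting that $\ker(\cok f)$ is a subobject of the target $B$ while $\cok(\ker f)$ is a quotient of the source $A$. Once the universal properties are invoked in the correct direction, there is nothing further to compute.
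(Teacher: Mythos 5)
Your proof is correct and is essentially the paper's own argument: the paper likewise invokes Lemma \ref{factoring} on the trivial factorizations $f = f\circ 1$ and $f = 1\circ f$, and your direct appeal to the universal properties of $\ker(\cok f)$ and $\cok(\ker f)$ is exactly what that lemma reduces to in this case.
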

\begin{proof}
	Consider the factorization $f = f\circ 1$ and $f=1\circ f$, where $1$ is an
	identity.
\end{proof}
It is perhaps intuitive that the image should be represented by our
generalized categorical subobject, and perhaps intuitive again that $f$ should
factor through a representative of this subobject. This intuition is captured in
the following definition. The final result of this section, a stronger version
of lemma \ref{factoring}, will build on results developed from lemma \ref{factoring}.

\begin{definition}
	 Consider a morphism $f:A\to B$ in an abelian category
	$\mathscr A$. Then the subobject $\ker\cok f$ is the
	\textbf{image} of $f$, denoted $\im f$. 
\end{definition}

Note further that $\im f: \Ker(\cok f)\to B$; we denote
$\Ker (\cok f)$ as  $\ima f$. Uniqueness (up to isomorphism) is given by the
uniqueness of the kernel. We will refer to $\im (f)$ and $\ker(\cok f)$
interchangeably, often working with $\ker(\cok f)$ for ease of manipulation and
understanding. 

The dual definition to this is also useful;

\begin{definition}
	 Consider a morphism $f:A\to B$ in an abelian category
	$\mathscr A$. Then the quotient object $\cok\ker f$ is the
	\textbf{coimage} of $f$, denoted $\coim g$.
\end{definition}

The following lemma is a useful technical result to relate images and
kernels with coimages and cokernels. It will become especially useful during our discussion
of exact sequences, where relating the kernel and image is of vital importance.
The lemma also plays a crucial role in our final theorem of this section, the
factorization of each morphism in an abelian category.
\begin{lemma}\label{differ}
	Let $f:A\to B$ be a morphism in an abelian category. Then
	$\ker(\cok(\ker f))$ and $\ker f$  differ by an isomorphism, as do
	$\cok(\ker(\cok f))$ and $\cok f$.
\end{lemma}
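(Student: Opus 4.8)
The plan is to prove the first claim, that $\ker f$ and $\ker(\cok(\ker f))$ differ by an isomorphism, and then obtain the second statement by dualizing. The strategy mirrors the uniqueness argument of lemma \ref{unique}: I would show that these two monics factor through one another, and then use left-cancellation (both are monic by lemma \ref{kermon}) to conclude that the two factoring maps are mutually inverse isomorphisms.

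First I would establish the easy direction. By definition of the cokernel we have $\cok(\ker f)\circ\ker f = 0$; since $\ker(\cok(\ker f))$ is by definition the kernel of $\cok(\ker f)$, its universal property yields a unique morphism $u$ with $\ker f = \ker(\cok(\ker f))\circ u$. Thus $\ker f$ factors through $\ker(\cok(\ker f))$.

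The reverse direction is the crux, and the main obstacle: I must show that $\ker(\cok(\ker f))$ factors through $\ker f$, for which it suffices (by the universal property of $\ker f$) to prove that $f\circ\ker(\cok(\ker f)) = 0$. This is where the coimage factorization enters. Since $f\circ\ker f = 0$ by definition of the kernel, the universal property of the cokernel $\cok(\ker f)$ produces a morphism $\bar f$ with $f = \bar f\circ\cok(\ker f)$; that is, $f$ factors through its coimage. Then, using that $\cok(\ker f)\circ\ker(\cok(\ker f)) = 0$ by definition of kernel,
$$f\circ\ker(\cok(\ker f)) = \bar f\circ\cok(\ker f)\circ\ker(\cok(\ker f)) = \bar f\circ 0 = 0.$$
Hence the universal property of $\ker f$ supplies a unique $v$ with $\ker(\cok(\ker f)) = \ker f\circ v$.

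Finally I would conclude by substituting the two factorizations into each other, obtaining $\ker f = \ker f\circ(v\circ u)$ and $\ker(\cok(\ker f)) = \ker(\cok(\ker f))\circ(u\circ v)$; since both $\ker f$ and $\ker(\cok(\ker f))$ are monic by lemma \ref{kermon}, left-cancellation forces $v\circ u = 1$ and $u\circ v = 1$, so $u$ is an isomorphism and the two kernels differ by it. The second statement, relating $\cok(\ker(\cok f))$ and $\cok f$, then follows by running the identical argument in the dual category, replacing kernels by cokernels, the coimage factorization by the image factorization $f = \ima f \circ \bar g$, and monic left-cancellation by epi right-cancellation.
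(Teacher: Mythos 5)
Your proposal is correct and follows essentially the same route as the paper: the paper likewise gets $\ker f$ factoring through $\ker(\cok(\ker f))$ (via the corollary of Lemma \ref{factoring}), then uses the coimage factorization $f = g\circ\cok(\ker f)$ to show $f\circ\ker(\cok(\ker f)) = 0$ and obtain the reverse factorization, concluding by mutual factoring of monics. You merely spell out the final cancellation step (in the style of Lemma \ref{unique}) that the paper leaves implicit.
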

\begin{proof}
	Consider that $\ker f$ is a monic, and by the corollary of lemma
	\ref{factoring} factors through $\ker(\cok(\ker f))$ uniquely. Conversely,
	$f\circ \ker f = 0$, so $f$ factors uniquely as $f=g\circ \cok(\ker f)$ by the
	definition of cokernel. But then $f\circ \ker(\cok(\ker f)) = g\circ
	\cok(\ker f)\circ \ker(\cok(\ker f)) = g\circ 0 = 0$, so by the definition
	of kernel $\ker(\cok(\ker f))$ factors uniquely through $\ker f$. Thus $\ker
	f$ and $\ker(\cok (\ker f))$ differ by an isomorphism. A symmetric argument
	shows $\cok(\ker(\cok f))$ differs by an isomorphism from $\cok f$. 
\end{proof}
\begin{corollary}
	In an abelian category, any monic $m$ differs by an isomorphism from its image, and any epi
	$e$ differs by an isomorphism from its coimage.
\end{corollary}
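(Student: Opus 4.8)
The plan is to reduce the corollary to Lemma \ref{differ} by invoking the second axiom of an abelian category, which guarantees that every monic is a kernel and every epi is a cokernel. This axiom is precisely the bridge needed: Lemma \ref{differ} is phrased in terms of $\ker f$ and $\cok f$ for an arbitrary morphism $f$, whereas the corollary speaks of an arbitrary monic or epi, so the work lies entirely in expressing a given monic as some kernel (and dually) and then recognizing the resulting composite kernel-cokernel expression as the image (resp. coimage).

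First I would treat the monic case. Given a monic $m$, the abelian axiom supplies a morphism $g$ with $m = \ker g$. I would then compute the image: by definition $\im m = \ker(\cok m) = \ker(\cok(\ker g))$. Applying Lemma \ref{differ} to the morphism $g$ yields that $\ker(\cok(\ker g))$ and $\ker g$ differ by an isomorphism. Since $\ker g = m$, this says exactly that $\im m$ and $m$ differ by an isomorphism, which is the desired conclusion.

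For the epi case I would run the dual argument. Given an epi $e$, the abelian axiom supplies a morphism $h$ with $e = \cok h$. By definition $\coim e = \cok(\ker e) = \cok(\ker(\cok h))$, and the second half of Lemma \ref{differ}, applied to $h$, gives that $\cok(\ker(\cok h))$ and $\cok h = e$ differ by an isomorphism. Hence $\coim e$ and $e$ differ by an isomorphism.

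I do not expect a genuine obstacle here, as the corollary is essentially a repackaging of Lemma \ref{differ}. The one step requiring care is the very first move in each case: one must remember to use the defining axiom of the abelian category to realize $m$ as a kernel (and $e$ as a cokernel), since without that substitution Lemma \ref{differ} cannot be brought to bear. Everything after that is a matter of unfolding the definitions of $\im$ and $\coim$ and matching terms.
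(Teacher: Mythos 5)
Your proposal is correct and follows the paper's own argument exactly: realize the monic as $\ker g$ via the abelian axiom, apply Lemma \ref{differ} to $g$, and dualize for epis. No differences worth noting.
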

\begin{proof}
	Every monic is the kernel of some $f$, so $m = \ker f$ differs by an
	isomorphism from $\ker(\cok(\ker f)) = \ker(\cok m)$. A symmetric argument holds for epis.
\end{proof}
	%
	%
The final lemma in this section gives perhaps the strongest characterization of
a general morphism possible in an abelian category. 
\begin{lemma}\label{me}
	Given a morphism $f:A\to B$ in an abelian category, $f = m\circ e$, with $m
	= \im f$ monic and $e = \coim f$ epi.
\end{lemma}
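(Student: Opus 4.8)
The plan is to build the factorization on the image side and then recognize the two factors as $\im f$ and $\coim f$. First I would apply the corollary to Lemma~\ref{factoring}, which says that $f$ factors through its image $\im f = \ker(\cok f)$; write this as $f = m\circ e$ with $m = \im f\colon \ima f\to B$ and $e\colon A\to \ima f$. Here $m$ is a kernel, hence monic by Lemma~\ref{kermon}, so one half of the statement is free, and $e$ is the unique morphism completing the triangle because $m$ cancels on the left. What remains is to show that this $e$ is epi and represents $\coim f$.

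Next I would compute the kernel of $e$ and show $\ker e\cong \ker f$. From $f\circ\ker e = m\circ e\circ\ker e = 0$ we see $\ker e$ factors through $\ker f$, and from $m\circ(e\circ\ker f) = f\circ\ker f = 0$ with $m$ monic we get $e\circ\ker f = 0$, so $\ker f$ factors through $\ker e$; since kernels are monic the two comparison maps compose to identities, giving the isomorphism. Consequently $\coim e = \cok(\ker e)\cong \cok(\ker f) = \coim f$. The payoff is that the whole lemma now reduces to a single claim: if $e$ is epi, then by the corollary to Lemma~\ref{differ} (an epi differs from its coimage by an isomorphism) we have $e\cong\coim e\cong\coim f$, and we are done.

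Thus the main obstacle is precisely showing that $e$ is epi. To make this tractable I would reformulate it through a comparison morphism. Dualizing the first step, $f\circ\ker f = 0$ lets $f$ factor through $\coim f = \cok(\ker f)$ as $f = m'\circ\coim f$; then $\cok f\circ m'\circ\coim f = \cok f\circ f = 0$ and $\coim f$ epi force $\cok f\circ m' = 0$, so $m'$ factors through $m = \ker(\cok f)$ as $m' = m\circ\bar f$. Cancelling the monic $m$ in $m\circ e = f = m'\circ\coim f = m\circ\bar f\circ\coim f$ yields $e = \bar f\circ\coim f$, and since $\coim f$ is epi one checks that $e$ is epi if and only if $\bar f$ is epi. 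This $\bar f$ is the canonical comparison map from the coimage to the image, and its being an isomorphism is the genuine content of the lemma.

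I expect this last point to be where all the work concentrates, and it is the only place where both abelian axioms---every monic is a kernel and every epi a cokernel---are used essentially rather than formally. I would prove $\bar f$ monic and epi and then conclude it is an isomorphism, using the cheap fact that a morphism which is both monic and epi is invertible: such a morphism has zero cokernel by Corollary~\ref{monicaa}, hence, being monic, is isomorphic to its image $\ker(\cok\bar f)$, which is an identity because the cokernel vanishes. The delicate verification is monic-ness of $\bar f$: showing $\ker\bar f = 0$ morally requires lifting a test morphism along the epi $\coim f$, and in a development without pullbacks this lift must instead be produced from the cokernel universal property of $\coim f = \cok(\ker f)$ together with the kernel presentation of the monic $\ker\bar f$ granted by the abelian axiom. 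This is the step I would budget the most care for; the remainder is bookkeeping with the factorization and uniqueness results already in hand.
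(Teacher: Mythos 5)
You have reproduced the paper's skeleton faithfully: factor $f = m\circ e$ through $m = \ker(\cok f)$ via the universal property of the kernel, note $m$ is monic by Lemma \ref{kermon}, cancel the monic $m$ to identify $\ker e$ with $\ker f$, and observe that once $e$ is known to be epi it coincides up to isomorphism with $\cok(\ker e) = \cok(\ker f) = \coim f$ by the corollary to Lemma \ref{differ}. You have also located the crux exactly where the paper does: everything hinges on $e$ being epi, equivalently on the comparison map $\bar f\colon \Cok(\ker f)\to \Ker(\cok f)$ being an isomorphism. The paper does not prove this step either; it cites MacLane (Ch.~8, \S 1, Lemma 1) for it.

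The gap is that your proposal stops at precisely this point. You say you would show $\bar f$ is monic and epi and hence invertible --- and your ``monic plus epi implies iso'' sub-argument via Corollary \ref{monicaa} and the corollary to Lemma \ref{differ} is sound --- but no actual argument for the monic-ness (or epi-ness) of $\bar f$ is given, and the one concrete idea you offer, lifting a test morphism along the epi $\coim f$, is not available in a general abelian category: that is exactly what projectivity would buy, and nothing in sight is projective. Here is a way to close the gap with tools already in the paper, applied directly to $e$. Suppose $h\circ e = 0$ and let $k = \ker h$; then $e$ factors as $e = k\circ e'$, so $f = (m\circ k)\circ e'$. The composite $m\circ k$ is monic, hence by the abelian axiom equals $\ker g$ for some $g$; then $g\circ f = g\circ m\circ k\circ e' = 0$, so $g$ factors through $\cok f$, whence $g\circ m = g\circ\ker(\cok f) = 0$ and $m$ factors through $\ker g = m\circ k$ as $m = m\circ k\circ t$. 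Cancelling the monic $m$ gives $k\circ t = 1$, so $k$ is a split epi as well as a monic, hence an isomorphism, and therefore $h = (h\circ k)\circ k^{-1} = 0$. This is where ``every monic is a kernel'' enters essentially, as you predicted; but without supplying such an argument (or an explicit citation, as the paper gives), the proposal does not yet prove the lemma.
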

\begin{proof}
	Note that $\cok f\circ f = 0$, but by the definition of $\ker(\cok f)$, any
	morphism $g$ with $\cok f\circ g = 0$ must factor uniquely through $\ker(\cok
	f)$, so $f$ must factor through $\ker(\cok f)$. Thus $f = m\circ e$ for $m =
	\ker(\cok f)$, and $e$ some unique morphism. A tedious proof from
	\autocite{maclane} (Chapter 8, Section 1, Lemma 1) shows that $e$ must be
	epi.\footnote{When referencing this lemma, note that for a pair of morphisms
	$f, g:A\to B$, the equalizer $e$ such that $ea = eb$ is exactly the kernel
	$k$
	of the morphism $f-g$, as the universal morphism which gives $k\circ (f-g) =
	0\implies k\circ f-k\circ g = 0\implies k\circ f = k\circ g$, and so any
	abelian category has equalizers for every pair of suitable morphisms.}
	Since $m$ is monic, $f\circ t = 0$ if and only if $e\circ t = 0$ for any
	appropriate $t$. Thus $\ker f = \ker e$, as any morphism $t$ which sends $f$
	to zero also sends $e$ to zero.
	Then since an epi $e$ is the cokernel of its kernel, $f = m\circ e = m\circ \cok(\ker
	e) = \cok (\ker f)$ and we are done.
\end{proof}
\begin{corollary}
	The object $\Ker(\cok f)$ is the object $\Cok(\ker f)$ up to isomorphism. 
\end{corollary}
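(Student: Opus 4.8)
The plan is to extract the statement directly from the factorization established in Lemma \ref{me}. That lemma gives $f = m \circ e$ with $m = \im f = \ker(\cok f)$ monic and $e = \coim f = \cok(\ker f)$ epi. The entire content of the corollary lies in the bookkeeping of sources and targets of these two morphisms, so I would begin by recording that data explicitly: by our conventions $\im f$ has source $\ima f = \Ker(\cok f)$ and target $B$, while $\coim f$ has source $A$ and target $\Cok(\ker f)$.

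Next I would observe that the composite $m \circ e$ is defined only when the target of $e$ agrees with the source of $m$. Reading off the data above, the target of $e$ is $\Cok(\ker f)$ and the source of $m$ is $\Ker(\cok f)$, so --- once fixed representatives of the relevant kernel and cokernel have been chosen --- these must be the same object. This is the one step that does the real work, and it requires nothing beyond Lemma \ref{me} together with the definitions of image and coimage.

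Finally I would address the phrase ``up to isomorphism''. Since kernels and cokernels are unique only up to isomorphism (Lemma \ref{unique}), and hence so are images and coimages, a different choice of representatives replaces each of $\Ker(\cok f)$ and $\Cok(\ker f)$ by an isomorphic object; transporting the equality obtained in the previous step across these isomorphisms yields $\Ker(\cok f) \cong \Cok(\ker f)$, as claimed. I do not anticipate a genuine obstacle, as the argument amounts to matching domains and codomains; the only point requiring care is invoking the uniqueness clause so that the conclusion holds independently of the representatives chosen.
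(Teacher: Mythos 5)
Your argument is correct and is essentially the paper's intended one: the corollary is meant to be read off from the factorization $f = \im f \circ \coim f$ of Lemma \ref{me} by matching the target of $\coim f$ with the source of $\im f$, with Lemma \ref{unique} supplying independence from the choice of representatives, exactly as you do.
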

	%

\section{Homology, Exact Sequences, and Right Derived Functors}
\subsection{Sequences and Homology}
We begin by constructing the main object we will work with, a \textbf{sequence}
of objects.
\begin{definition}
	A \textbf{sequence} is an ordered collection of objects  in
	an abelian category $\mathscr A$, together with a morphism from each object
	to it's successor called the \textbf{differentials}. The composition of
	successive differentials must also be 0.
\end{definition}
 The crucial element of this construction is that the composition of
successive maps is zero. This, ultimately, allows us to construct quotients;
intuitively, speaking in group- or set-theoretic terms, the image of one map is
then a subset of the kernel of the next. Since both the image and the kernel are
subgroups, we can then form the quotient of the two - which, ultimately, is the
homology group. From a categorical perspective, we must express this by way of
factoring, which the following lemma does neatly. 
\begin{lemma}\label{chain}
	Let $f, g$ be morphisms in an abelian category. If $f\circ g = 0$, then
	$\im g$ factors through the kernel of $f$ as $\im a'$, and $\coim f$ factors
	as $a'\circ \cok g$ for some $a'$.
\end{lemma}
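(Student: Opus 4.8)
The plan is to exploit the monic-epi factorization of Lemma \ref{me} to reduce each claim to a cancellation followed by an application of a universal property. Write $g:A\to B$ and $f:B\to C$, so that $f\circ g = 0$ is the hypothesis, $\im g = \ker(\cok g)$ is a monic into $B$, and $\ker f$ is a monic into $B$; the first assertion is that $\im g = \ker f\circ a'$ for some morphism $a'$, and dually the second asserts $\coim f = a'\circ \cok g$ for some morphism.

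For the first assertion, I would begin by factoring $g = \im g\circ \coim g$ via Lemma \ref{me}, with $\im g$ monic and $\coim g$ epi. Substituting into the hypothesis gives $f\circ \im g\circ \coim g = 0$. Since $\coim g$ is epi, Lemma \ref{monica} lets me cancel it on the right, leaving $f\circ \im g = 0$. Now the defining universal property of $\ker f$, that every morphism killed by $f$ factors uniquely through $\ker f$, applies to $\im g$ and produces the unique $a'$ with $\im g = \ker f\circ a'$. This is exactly the statement that $\im g$ factors through the kernel of $f$.

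The second assertion follows by the dual argument. Factor $f = \im f\circ \coim f$ by Lemma \ref{me}, substitute to obtain $\im f\circ \coim f\circ g = 0$, and cancel the monic $\im f$ on the left via Lemma \ref{monica} to get $\coim f\circ g = 0$. The universal property of the cokernel $\cok g$, that every morphism annihilating $g$ on the right factors uniquely through $\cok g$, then yields the unique morphism with $\coim f = a'\circ \cok g$.

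The main obstacle, and the one genuine idea, is the first move: recognizing that to connect $f\circ \im g$ with the hypothesis $f\circ g = 0$ one must first replace $g$ by its image factorization, since $\im g$ on its own interacts only with $\cok g$. Once the factorization is in place the epi (respectively monic) half is cancelled by Lemma \ref{monica}, and the kernel (respectively cokernel) universal property finishes the argument; every remaining step is routine, and the two halves are exact duals. I would also note that the two morphisms written $a'$ are strictly distinct arrows, $\ima g\to \Ker f$ in the first case and $\Cok g\to \Cok(\ker f)$ in the second, related only through the self-duality of the abelian category.
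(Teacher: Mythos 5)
Your proof is correct, and it reaches the same two intermediate facts as the paper ($f\circ\ker(\cok g)=0$ and $\coim f\circ g=0$) before finishing identically with the universal properties of $\ker f$ and $\cok g$ — but you get to those facts by a different mechanism. The paper never invokes the epi--monic factorization of Lemma \ref{me}: for the first half it uses the universal property of $\cok g$ to write $f = a\circ\cok g$ (possible precisely because $f\circ g=0$) and then kills $\ker(\cok g)$ via the defining equation $\cok g\circ\ker(\cok g)=0$; for the second half it writes $g=\ker f\circ a'$ and uses $\cok(\ker f)\circ\ker f=0$. You instead factor $g=\im g\circ\coim g$ (resp.\ $f=\im f\circ\coim f$) by Lemma \ref{me} and cancel the epi $\coim g$ (resp.\ the monic $\im f$) using Lemma \ref{monica}. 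Both are valid and of comparable length, but the paper's route is lighter on prerequisites: it needs only the universal properties of kernels and cokernels, whereas yours leans on Lemma \ref{me}, whose proof the paper itself outsources to a ``tedious'' argument in Mac\,Lane, and additionally on the additive structure hidden inside Lemma \ref{monica}. What your version buys in exchange is a more transparent reading of the statement — ``$f$ kills $\im g$ because it kills $g$, and an epi can be cancelled'' — and your closing remark that the two occurrences of $a'$ are distinct arrows ($\ima g\to\Ker f$ versus $\Cok g\to\Cok(\ker f)$) is a genuinely useful clarification of the lemma's somewhat garbled statement.
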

\begin{proof}
	Note that if $f\circ g = 0$, then $f$ factors through $\cok g$ as $f =
	a\circ\cok g$ for some morphism $a$. Then $(a\circ \cok
	g)\circ \ker(\cok g) = 0$, so $\ker(\cok g)$ factors through $\ker(a\circ \cok
	g) = \ker f$ as $\ker(\cok g) = \ker f\circ a$.
	Similarly, note that $g$ factors as $g = \ker f\circ a'$. Then
	$\cok(\ker f)\circ (\ker f\circ a) = 0$, so $\cok(\ker f)$ factors through
	$\cok (\ker f\circ a) = \cok g$ as $\cok(\ker f) = a'\circ \cok g$. 
\end{proof}

We now distinguish between \textbf{chain complexes} and their dual,
\textbf{cochain complexes}. As
we build towards a construction of right derived functors, we will focus
primarily on cochain complexes. However, every construction we perform can be
dualized by passing into the opposite category to apply to chain complexes. 

\begin{definition}
	If objects in the sequence are numbered such that the maps take object $n$ to
	object $n-1$, the sequence is called a chain complex. If maps take object
	$n$ to object $n+1$, the sequence is called a cochain complex.
\end{definition}

We will work primarily with cochain complexes, which we will denote by a capital
letter with a superscript bullet, $A^\bullet$. For a cochain complex
$A^\bullet$, the individual objects will be denoted $A^1,
A^2, $ etc. Having defined an object, it is now preferable to define a map between two such
objects.
\begin{definition}
	Let $A^\bullet$ and $B^\bullet$ be cochain complexes with differentials $d^n$
	and $\partial^n$. A cochain map $f$ is a $(f^n)_{n\in \mathbb
	Z}$ such that $f^n:A^n\to B^n$ and $f^n\circ d^n = \partial^n\circ f^{n-1}$.
\end{definition}

As we now have objects and morphisms, we can construct a category. What is
perhaps more interesting is that this category inherent the abelian properties
of the category from which it is constructed. This instantly admits the
interesting idea of higher order constructions, chain complexes of chain
complexes which (inductively) will all be abelian categories. Already, we see
the benefits of our general development; although it is beyond the scope of this
paper, we could instantly apply every result proven heretofore to the category
of cochain complexes of an abelian category, without any loss of rigor.

\begin{theorem}
	Given an abelian category $\mathscr{A}$, the category $\textbf{CCH}(\mathscr
	A)$ with objects cochain complexes of objects in $\mathscr A$ and morphisms
	cochain maps is an abelian category, with composition defined componentwise. 
\end{theorem}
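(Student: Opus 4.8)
The plan is to build every piece of the abelian structure on $\Kom(\mathscr A)$ \emph{degreewise}, pushing each verification down to the already-established abelian structure of $\mathscr A$; that composition of cochain maps is again a cochain map is immediate, so $\Kom(\mathscr A)$ is a category, and the work is to show it is abelian. First I would exhibit the additive structure. The zero object is the complex that is $0$ in every degree with zero differentials; a cochain map into or out of it is forced to be zero in each degree, so it is both initial and final. For the biproduct of $A^\bullet$ and $B^\bullet$ I would take $(A\oplus B)^n = A^n\oplus B^n$ with differential $d_A^n\oplus d_B^n$ in the sense of Lemma \ref{fg}; this is again a complex, since $(d_A^{n+1}\oplus d_B^{n+1})\circ(d_A^n\oplus d_B^n) = (d_A^{n+1}\circ d_A^n)\oplus(d_B^{n+1}\circ d_B^n) = 0$ by the uniqueness clause of Lemma \ref{fg}. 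The canonical projections and inclusions are cochain maps precisely by the commutation relations of Lemmas \ref{fg} and \ref{fg'}, the universal map assembling a cone over $A^\bullet, B^\bullet$ is a cochain map by the same uniqueness argument, and $\pi_i\circ\iota_i = 1$ holds in each degree because it holds in $\mathscr A$. Finally, the degreewise family $(-1_{A^n})$ is a cochain map — here one uses $(-1_{A^{n+1}})\circ d^n = d^n\circ(-1_{A^n})$, which holds because both sides are the unique additive inverse of $d^n$ — and serves as the required $-1_{A^\bullet}$. This verifies the three axioms of Definition \ref{add}.

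Next I would construct kernels and cokernels degreewise, which is the technical heart of the argument. Given a cochain map $f^\bullet:A^\bullet\to B^\bullet$, set $(\Ker f)^n = \Ker f^n$. Since $f^{n+1}\circ d_A^n\circ\ker f^n = \partial^n\circ f^n\circ\ker f^n = 0$, the composite $d_A^n\circ\ker f^n$ factors uniquely through $\ker f^{n+1}$, defining an induced differential $\tilde d^n$; that $\tilde d^{n+1}\circ\tilde d^n = 0$ follows by cancelling the monic $\ker f^{n+2}$ (Lemma \ref{kermon}) against $d_A^{n+1}\circ d_A^n\circ\ker f^n = 0$. The maps $\ker f^n$ then assemble into a cochain map $\ker f^\bullet$, and I would verify its universal property in $\Kom(\mathscr A)$: any cochain map $g^\bullet$ with $f^\bullet\circ g^\bullet = 0$ factors degreewise as $g^n = \ker f^n\circ a^n$, and the family $(a^n)$ is automatically a cochain map, again by cancelling the monic $\ker f^{n+1}$. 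The cokernel is dual. In particular, kernels and cokernels in $\Kom(\mathscr A)$ are computed degreewise.

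It remains to verify the two abelian axioms. Existence of kernels and cokernels is the previous paragraph. For the second axiom I would first record that, by Corollary \ref{monicaa} together with the degreewise computation of kernels, a cochain map $f^\bullet$ is monic if and only if each $f^n$ is monic — its kernel complex is the zero complex iff every $\Ker f^n = 0$ — and dually for epis. Now let $f^\bullet$ be monic. Each $f^n$ is then monic in $\mathscr A$, hence a kernel, so by Lemma \ref{differ} it differs by an isomorphism from $\ker(\cok f^n)$. Forming $\cok f^\bullet$ and then $\ker(\cok f^\bullet)$ degreewise produces exactly these objects in each degree, and the degreewise isomorphisms commute with the induced differentials by monic cancellation; hence $f^\bullet\cong\ker(\cok f^\bullet)$ and $f^\bullet$ is a kernel. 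The dual argument shows every epi is a cokernel.

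The main obstacle I anticipate is not any single identity but the recurring bookkeeping needed to promote \emph{degreewise} data to genuine cochain maps: at each stage (the induced differentials on $\Ker f^\bullet$ and $\Cok f^\bullet$, the factoring family $(a^n)$, and the isomorphism $f^\bullet\cong\ker(\cok f^\bullet)$) one must check commutation with differentials, and in every case the argument is the same — cancel an appropriate monic or epi supplied by Lemma \ref{kermon}. Making sure these cancellations are valid, and that the degreewise kernel really satisfies the universal property in $\Kom(\mathscr A)$ rather than merely in each degree, is where the care is required; the additive axioms, by contrast, are essentially immediate from Lemmas \ref{fg} and \ref{fg'}.
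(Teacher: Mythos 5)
Your proposal is correct, and its skeleton (zero object, biproducts, kernels, and cokernels all built degreewise, with every verification pushed down to $\mathscr A$) matches the paper's proof; the genuine divergence is in the final axiom, that every monic is a kernel. The paper establishes that a monic $m^\bullet$ is degreewise monic by an explicit test: it packages $h-g$ into a two-term cochain complex mapping into $A^\bullet$ and cancels it against $m^\bullet$. It then realizes each $m^n$ as $\ker f^n$ for some epi $f^n:B^n\to C^n$ and must \emph{construct from scratch} a differential on the family $(C^n)$ making $f^\bullet$ a cochain map with kernel $m^\bullet$. You avoid both constructions: you get "monic iff degreewise monic" for free from Corollary \ref{monicaa} applied in both $\mathscr A$ and $\Kom(\mathscr A)$ once degreewise kernels are in hand, and you exhibit $m^\bullet$ as a kernel of the \emph{already-built} cochain map $\cok m^\bullet$, using Lemma \ref{differ} (via its corollary) in each degree and cancelling the monics $\ker(\cok m^{n+1})$ to see that the degreewise isomorphisms assemble into an isomorphism of complexes $m^\bullet\cong\ker(\cok m^\bullet)$. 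Your route is shorter and reuses more of the machinery already proved; the paper's route is more self-contained at that point (it does not need the characterization of monics by vanishing kernels in the new category) but pays for it with the ad hoc complex $C^\bullet$ and the extra verification that its differential squares to zero. Both are valid; the one point to keep explicit in your version is that Corollary \ref{monicaa} may legitimately be invoked in $\Kom(\mathscr A)$ only after you have shown that category to be additive with kernels, so the order of your paragraphs is load-bearing.
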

\begin{proof}
	The categorical axioms (morphisms, objects, associativity, identity) follow
	from the categorical definitions in $\mathscr A$, using the fact that
	composition is defined componentwise. 
First we show that $\Kom (\mathscr A)$ is an additive category; that is, a category with
a \textbf{biproduct} $\oplus$ satisfying both the product and coproduct axioms,
which induces the structure of an abelian group on $\hom(X^\bullet, Y^\bullet)$
for any $X^\bullet, Y^\bullet$.
Note that for cochain complexes $X^\bullet$ and $Y^\bullet$ the product $(X\oplus
Y)^\bullet$ satisfies 

\centerline{
	\xymatrix{
		& &  X^n\\
		U \ar@/^1.1pc/[urr]\ar@/_1.1pc/[drr]\ar@{.>}[r]& X^n\oplus
		Y^n\ar[ur]^{\pi^n}\ar[dr]^{\pi^n}\\
		& &  Y^n
	}
}

For each $U$ that maps to $X^n$ and $Y^n$, and likewise satisfies

\centerline{
	\xymatrix{
		& &  X^n\ar@/_1.1pc/[dll]\ar[dl]_{\iota^n}\\
		U &\ar@{.>}[l] X^n\oplus Y^n\\
		& &  Y^n\ar@/^1.1pc/[ull]\ar[ul]_{\iota^n}
	}
}
For each $U$ which is mapped to by $X_n$ and $Y_n$,
by the definition of the biproduct for $X$. Since morphisms of cochain complexes
are just $\mathbb Z$-indexed families morphisms in $\mathscr A$ making ladders of cochain
complexes commute, and since the universal property of each of these diagrams
makes the ladders commute, we can see that the `indexwise' biproduct $(X\oplus
Y)^\bullet$ together with the projections and inclusions $\pi^\bullet,
\iota^\bullet$ satisfies the axioms for a biproduct, and so $\mathscr A^\bullet\oplus
B^\bullet$ exists. Furthermore, the sum induced on $\hom(X^\bullet, Y^\bullet)$ by $\oplus$ is
just the indexwise sum of morphisms in the category $\mathscr A$; the zero object is the
zero complex $0^n = 0$ for 0 the zero object in $\mathscr A$. Finally, if we let
$f^\bullet$ be a morphism of cochain complexes, then $f^n - f^n = 0$ for each $n$
because $\hom_\mathscr A(X^n, Y^n)$ has the structure of an abelian group;
as such, $f^\bullet:X^\bullet \to Y^\bullet$ has an additive inverse $-f^\bullet
= (-f)^\bullet$. Furthermore, this inverse commutes with the differential;
$-f^n\circ \delta - d\circ -f^{n-1}= -(f^n\circ \delta - d\circ f^{n-1})$ by
distributivity in the additive category $\mathscr A$. Then since $f^\bullet$
commutes with $d$, this is zero, so $-f^n\circ \delta = d\circ -f^{n-1}$ and $-f^\bullet$
lives in $\hom(X^\bullet, Y^\bullet)$. Thus, $\Kom(\mathscr A)$ is an additive
category. 

It suffices to show the existence of monic kernels and epi cokernels. Let
$f:A^\bullet\to B^\bullet$ be a morphism of cochain maps, and consider $\ker(f^n)$
for each $n$. By the definition of the kernel, we know that $f^n\circ \ker f^n = 0$,
and that if $g^\bullet$ is another cochain map, with $f^n\circ g^n = 0$, then
$g^n$ factors through $f^n$. It suffices to show that $\Ker f^n$ forms a cochain
complex. But we note that $d\circ \ker f^{n-1}$ (where $d$ is the differential
of $A^\bullet$) is a morphism satisfying $f^n\circ d\circ \ker f^{n-1} =
\ker f^{n} \circ f^{n} \circ d = 0$, and so factors through $\ker f^n$; 
this factorization yields a map $\partial^n:\Ker(f^{n-1})\to \Ker(f^n)$ so
that $\ker f^n$ commutes with $\partial$ by construction. Furthermore,
$\ker f^n\circ \partial^n\circ \partial^{n-1} =\ker f^{n-2} \circ d\circ d = 0$.
But since $\ker f^n$ is a monic, this means that $\partial ^n\circ \partial
^{n-1}$ is zero, and so is a differential making $\Ker(f^\bullet)$ a cochain
complex. Thus $\Kom(\mathscr A)$ has kernels; a dual argument shows cokernels. It
suffices to show that every monic is a kernel (and dualize for epis and
cokernels). Let $m^\bullet:A^\bullet \to B^\bullet$ be a monic in $\Kom(\mathscr A)$.
Then suppose $m^n\circ h = m^n\circ g$ for some $m^n$ and $h,g\in
\hom_\mathscr A(C^n, A^n)$. Then $m^n\circ (h-g) = 0$. But we can construct a
cochain complex:\\

\centerline{\xymatrix{
	...\ar[r]&0\ar[r]\ar[d]^{0}& C^n \ar[r]^{h-g}\ar[d]^{h-g} &A^n
	\ar[r]\ar[d]^{d} &0\ar[r]\ar[d]^{0} & ...\\
	...\ar[r]^d&A^{n-1}\ar[r]^d& A^{n}\ar[r]^{d}& A^{n+1}  \ar[r]^{d} &A^{n+2}\ar[r] & ...\\
}}

\bigskip

This is a cochain complex (the top row) with differential $\delta$ and a map $f$
for which the commutivity condition $\delta\circ f^n = d\circ (h-g) = 
\delta \circ f^{n-1}$. Since in this
composition $h-g$ commutes with the differential, composing with $m^n$ gives
the diagram \\

\centerline{\xymatrix{
	...\ar[r]&0\ar[r]\ar[d]^{0}& C^n \ar[r]^{h-g}\ar[d]^{h-g} &A^n
	\ar[r]\ar[d]^{d} &0\ar[r]\ar[d]^{0} & ...\\
	...\ar[r]^d&A^{n-1}\ar[r]^d\ar[d]^{m^{n-1}}& A^{n}\ar[r]^{d}\ar[d]^{m^n}&
	A^{n+1}  \ar[r]^{d}\ar[d]^{m^{n+1}} &A^{n+2}\ar[r]\ar[d]^{m^{n+2}} & ...\\
	...\ar[r]^d&B^{n-1}\ar[r]& B^{n}\ar[r]& B^{n+1}  \ar[r] &B^{n+2}\ar[r] & ...\\
}}

\bigskip

Which means $m^n\circ (h-g) = 0\implies h-g=0$, and so $m^n$ is a monic in $\mathscr
A$. But this means that it is the kernel of some morphism $f^n:C^n\to B^n$ for
every $n$. Since $\ker\cok\ker(f^n) = \ker(f^n)$,  we can suppose without loss
of generality that $f^n$ is epi. Furthermore, the universal property of kernels applied to the
composition $d\circ f^{n-1}$ where $d$ is the differential of $B^\bullet$ gives a
map $\delta ^n : C^{n-1} \to C^n$ such that $f^\bullet$ commutes with
$\delta^\bullet$. Moreover, we have $\delta^n\circ \delta^{n-1}\circ f^{n-2}=
f^{n}d^n\circ d^{n-1}$. Since $f$ is an epi, this means that $\delta^n\circ
\delta^{n-1}$ is zero. This gives a cochain complex $C^\bullet$ where $f^\bullet$ is a
cochain map between cochain complexes $C^\bullet$ and
$B^\bullet$. Since the kernel is defined degreewise, $m^\bullet$ is the kernel
of a cochain map $f^\bullet$. A dual argument shows that every epi is a cokernel, and we are
done.
\end{proof}

We now develop what is perhaps the main construction in this paper, the
\textbf{cohomology object}. If we were working in the category of abelian
groups, or the category of $R$-modules, the kernel of the differential would be
a subgroup or sub-module of the image, and we could take the quotient of the
two. As we are working in a general category, we replace the sets "image"  and
"kernel" with the morphisms $\ker(\cok d^n)$ and $\ker d^{n+1}$; we replace
"is a subset of" with "factors through", and we replace quotients with cokernels
of an appropriate morphism, or a quotient object of two subobjects.

\begin{definition}
	Let $A^\bullet$ be a cochain complex with differentials $d^n$. Then for each
	$d^n$, lemma \ref{chain} states that $\ker (\cok f)$ factors as $\ker( \cok
	f) = \ker f\circ a$. The \textbf{cohomology object} $H^n(A^\bullet)$ is the
	quotient object $\cok a = \ker d^{n+1}/\im d^n$
\end{definition}

Note the cokernel $\cok a$ effectively measures how much $\ker(\cok  d^n)$
differs from $\ker d^{n+1}$. Moreover, the cokernel in general is a sort of
"universal epi"; in set theoretic terms, a "universal surjection". As such, it
makes intuitive sense that it represents the quotient; it specifies the largest
object which sends a morphism to zero, capturing all information left after a
morphism has acted. We will see that the homology group is truly a
representation of the failure of a sequence to be \textbf{exact}.

\begin{definition} A
	sequence is \textbf{exact at an object} $B$ with morphisms $f:A\to B$ and
	$g:B\to C$ if the subobject associated with $\ker\cok f$ is equal to the
	subobject associated with  $\ker g$, and a sequence is said to be
	\textbf{exact} if it is exact at every object.  
\end{definition}
As hinted at above, we see that exactness represents when $\ker(\cok d^n)$ "is
effectively the same as" (differs by a factor of an isomorphism from) $\ker
d^{n+1}$; a homology group "measures" this failure. A simple way
to verify this assertion is to examine the effect of homology on an exact
sequence:
\begin{lemma}
	If a cochain complex is exact at $A^n$, the cohomology object $H^n(A^\bullet)$
	is the zero morphism.
\end{lemma}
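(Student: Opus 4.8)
The plan is to unwind the definition of $H^n(A^\bullet)$ and show that the comparison morphism $a$ appearing in it is forced to be an isomorphism when the complex is exact, whence its cokernel vanishes. Recall that $H^n(A^\bullet)$ is defined as $\cok a$, where $a$ is the morphism furnished by lemma \ref{chain} applied to the relation $d^{n+1}\circ d^n = 0$: it is the unique factorization of the image through the kernel, so that $\ker(\cok d^n) = \ker d^{n+1}\circ a$. Thus $a$ is a morphism from the object $\ima d^n$ to the object $\Ker d^{n+1}$, both of which sit inside $A^{n+1}$ via the monics $\im d^n = \ker(\cok d^n)$ and $\ker d^{n+1}$. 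The whole argument will hinge on comparing these two subobjects.

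First I would record what exactness supplies. By definition, exactness at the object through which both differentials factor asserts that the subobject $\ker(\cok d^n)$ and the subobject $\ker d^{n+1}$ are equal; under the paper's convention on subobjects this means there is an isomorphism $\phi$ with $\ker(\cok d^n) = \ker d^{n+1}\circ \phi$. The crux is then to identify $a$ with $\phi$. Here I would invoke lemma \ref{kermon}, which guarantees that $\ker d^{n+1}$ is monic: from the two equalities $\ker d^{n+1}\circ a = \ker(\cok d^n) = \ker d^{n+1}\circ \phi$ we may cancel $\ker d^{n+1}$ on the left to conclude $a = \phi$, so that $a$ is an isomorphism. This passage from ``equal as subobjects'' to ``$a$ is an isomorphism,'' resting on monic cancellation together with the uniqueness of the factorization in lemma \ref{chain}, is where the content of the lemma lives; everything surrounding it is bookkeeping.

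Finally I would apply corollary \ref{isoker}, which states that the cokernel of any isomorphism is $0$. Since $H^n(A^\bullet) = \cok a$ and $a$ is an isomorphism, we obtain $H^n(A^\bullet) = \cok a = 0$, i.e. the cohomology object is the zero morphism, as claimed. The main obstacle I anticipate is purely organizational rather than deep: one must be careful to track \emph{which} object exactness is required at — namely the common target $A^{n+1}$ of $\im d^n$ and $\ker d^{n+1}$ — and to verify that ``equality of subobjects'' in the exactness hypothesis really delivers a \emph{two-sided} inverse compatible with the inclusions, so that the $\phi$ above is genuinely an isomorphism and not merely a one-sided factorization; once that is pinned down, corollary \ref{isoker} finishes the proof immediately.
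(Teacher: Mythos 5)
Your proposal is correct and follows essentially the same route as the paper: identify the comparison morphism $a$ with the isomorphism supplied by exactness and then kill its cokernel via corollary \ref{isoker}; you are merely more explicit about the monic-cancellation step that the paper leaves implicit in its reduction to ``two monics differing by an isomorphism have quotient zero.'' (One small bookkeeping note: with the paper's indexing both $\im d^n$ and $\ker d^{n+1}$ are subobjects of $A^n$, not $A^{n+1}$.)
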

\begin{proof}
	It will suffice to show that, for any two monics $f$ and $g$, if $f$ differs
	by an isomorphism from $g$,
	$f / g = 0$.  Note that if $f = g\circ i$ for an
	isomorphism $i$, by the definition of a quotient, $f/g = \cok i$.
	But $\cok i = 0$ by corollary \ref{isoker}, and we are done.
\end{proof}

We wish, ultimately, to turn cohomology into a functor.  Ultimately, given two cochain complexes $A^\bullet$ and
$B^\bullet$,
with a cochain map $f = (f^n)$ between them,
we seek to find morphisms from the $n$th cohomology of the first cochain map
to the $n$th cohomology of the second in a functorial manner. We go about this
by a construction drawing very heavily on one from
\cite{ruiter}. 

\begin{lemma}\label{homhom}
	Given cochain complexes $A^\bullet$ and $B^\bullet$ and a cochain map
	$f:A^\bullet \to B^\bullet$, we can construct the \textbf{cohomology map}
	$H^n(f):H^n(A^\bullet)\to H^n(B^\bullet)$.	
\end{lemma}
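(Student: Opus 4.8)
The plan is to build $H^n(f)$ in three stages: first produce a morphism on kernels, then verify it carries images to images, and finally descend to the cohomology quotients using the universal property of the cokernel. Write $d^n$ and $\partial^n$ for the differentials of $A^\bullet$ and $B^\bullet$. By Lemma \ref{chain} applied to $d^{n+1}\circ d^n = 0$ we obtain $\im d^n = \ker d^{n+1}\circ a$ with $H^n(A^\bullet) = \cok a$, and likewise $\im\partial^n = \ker\partial^{n+1}\circ b$ with $H^n(B^\bullet) = \cok b$. The cochain identity $f^n\circ d^n = \partial^n\circ f^{n-1}$ will be used both in its stated form and in its reindexed form $f^{n+1}\circ d^{n+1} = \partial^{n+1}\circ f^n$.

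First I would construct the map on kernels. Since $\partial^{n+1}\circ f^n\circ\ker d^{n+1} = f^{n+1}\circ d^{n+1}\circ\ker d^{n+1} = 0$, using the reindexed cochain identity together with $d^{n+1}\circ\ker d^{n+1} = 0$, the universal property of $\ker\partial^{n+1}$ furnishes a unique morphism $\bar f:\Ker d^{n+1}\to\Ker\partial^{n+1}$ with $\ker\partial^{n+1}\circ\bar f = f^n\circ\ker d^{n+1}$.

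Second, I would show that $f^n$ respects images. Factor $d^n = \im d^n\circ\coim d^n$ with $\coim d^n$ epi (Lemma \ref{me}). Then $\cok\partial^n\circ f^n\circ\im d^n\circ\coim d^n = \cok\partial^n\circ f^n\circ d^n = \cok\partial^n\circ\partial^n\circ f^{n-1} = 0$, and cancelling the epi $\coim d^n$ on the right (Lemma \ref{monica}) gives $\cok\partial^n\circ f^n\circ\im d^n = 0$. Hence $f^n\circ\im d^n$ factors through $\im\partial^n = \ker(\cok\partial^n)$, say $f^n\circ\im d^n = \im\partial^n\circ c$.

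Finally, I would glue these together and pass to cohomology. Using the factorizations above, $\ker\partial^{n+1}\circ\bar f\circ a = f^n\circ\ker d^{n+1}\circ a = f^n\circ\im d^n = \im\partial^n\circ c = \ker\partial^{n+1}\circ b\circ c$; cancelling the monic $\ker\partial^{n+1}$ (Lemma \ref{kermon}) yields $\bar f\circ a = b\circ c$. Consequently $\cok b\circ\bar f\circ a = \cok b\circ b\circ c = 0$, so by the universal property of the cokernel $\cok a$ the composite $\cok b\circ\bar f$ factors uniquely as $H^n(f)\circ\cok a$, which defines the desired $H^n(f):H^n(A^\bullet)\to H^n(B^\bullet)$. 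The hardest part is exactly this compatibility step $\bar f\circ a = b\circ c$: it is the categorical replacement for checking that the induced map on kernels sends the image of one differential into the image of the next, so that it is well defined on the quotient, and since we cannot argue elementwise it depends on cancelling the epi $\coim d^n$ and the monic $\ker\partial^{n+1}$ at precisely the right moments.
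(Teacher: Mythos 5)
Your construction is correct and follows essentially the same route as the paper's own proof: your $\bar f$, $c$, and $b$ are the paper's $\alpha$, $\beta$, and $a'$, and in both arguments the key compatibility $\bar f\circ a = b\circ c$ is obtained by factoring $d^n$ through its image, cancelling the epi $\coim d^n$, and then cancelling the monic $\ker\partial^{n+1}$ before invoking the universal property of $\cok a$. No substantive differences to report.
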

%
%

When examining the following proofs, it may be useful to refer to a diagram...
	\begin{figure}[h]
		\label{fig}
		\centering
		\caption{Constructing the Homology of a Morphism}
		\bigskip
		\centerline{
\xymatrix{
	\ima d^n\ar[r]^{a}\ar[dr]_{\im d^n}	&
	\Ker d^{n+1}\ar[r]^{\cok a}\ar[d]^{\ker d^{n+1}}&
	\Cok a& \ar@{=}[l] H^n(A^\bullet)				&
	\\
	A^{n-1}\ar[u]^{\coim d^n}\ar[d]^{f^{n-1}}\ar[r]_{d^n}	&
	A^{n}\ar[d]^{f^{n-1}}\ar[r]_{d^{n+1}}	&
		A^{n+1}\ar[d]^{f^{n-1}}\ar[r]_{d^{n+2}} & ... 	\\
	B^{n-1}\ar[d]_{\coim\partial^n}\ar[r]^{\partial^n}	&
	B^{n}\ar[r]^{\partial^{n+1}}	&	B^{n+1}	\ar[r]^{\partial^{n+2}} & ... 	\\
	\ima \partial^n\ar[r]^{a'}\ar[ur]^{\im \partial^n}	&
	\Ker \partial^{n+1}\ar[r]^{\cok a'}\ar[u]_{\ker \partial^{n+1}}&
	\Cok a'& \ar@{=}[l] H^n(B^\bullet)				&
	\\
}}
	\end{figure}

\begin{proof}
	Let $d^n$ denote
	the differential of $A^\bullet$, and $\partial^n$ the differential of
	$B^\bullet$. By lemma
	\ref{chain}, we know that $\im d^n$ factors as $\ker d^{n+1}\circ a$ for
		some $a:\ima d^n = \Cok (\ker d^n)\to \Ker d^{n+1}$. Similarly,
		$\im \partial^n$ factors as $\ker
	\partial^{n+1}\circ a'$ for  some $a':\Cok(\ker
	\partial^n)\to\Ker\partial^{n+1}$ by lemma \ref{chain}. In particular, note that $H^n(A^\bullet) =
	\Cok a$ and $H^n(B^\bullet) = \Cok a'$.

	We will begin by constructing maps $\alpha:\ker d^{n+1}\to\ker\partial^{n+1}$
	and $\beta:\ima d^n \to \ima \partial^n$ which commute with $a$ and
	$a'$. This will eventually allow us to construct our maps between cohomology
	objects. Note that the map $\partial^{n+1}\circ f^{n}\circ\ker d^{n+1}=
	f^{n+1}\circ d^{n+1}\circ \ker d^{n+1} = 0$. Thus, $f^n\circ \ker d^{n+1}$
	factors as $\ker \partial^{n+1}\circ \alpha$ where $\alpha:\ker d^{n+1}\to
		\ker\partial^{n+1}$. Similarly, note that
	the map $\cok \partial^n\circ\partial^n\circ f^{n-1}$ is zero identically.
	However, by the definition of a chain map this is equivalent to $\cok
	\partial^n\circ f^n\circ d^n$. Since $d^n$ factors as $\ker d^{n+1}\circ a \circ
		\coim d^n$ (by lemma \ref{me} and the definition of $a$; see figure
		\ref{fig} for a visual representation) we have that  
	$$\cok \partial^n\circ f^n\circ d^n = \cok \partial^n\circ f^n\circ \ker d^{n+1}\circ a \circ
	\coim d^n$$
	But since this whole expression was zero from it's construction, and $\coim d^n$ is epi, we have that $
	\cok \partial^n\circ f^n\circ \ker d^{n+1}\circ a$ is zero. Finally, this means
	that $f^n\circ \ker d^{n+1}\circ a$ factors as $\im f\circ \beta$. 
	Finally, we observe that $\ker \partial^{n+1}\circ \alpha
	\circ a = f^{n+1}\circ \alpha \circ a = \ker(\cok \partial^n)\circ \beta =
	\ker\partial^{n+1}\circ a'\circ\beta$. However, since $\ker\partial^{n+1}$ is a
	monic, we can cancel to obtain $a'\circ \beta = \alpha\circ a$. 
	
	This allows us to note that $\cok a'\circ \alpha\circ a = \cok a'\circ a'\circ
	\beta = 0$, so by the definition of a cokernel we establish that
	$\cok a'\circ \alpha$ factors uniquely as $h\circ \cok a$, where $h:\Cok a\to
		\Cok a'$. Define $H^n(f) = h$ to obtain the desired result. 
\end{proof}

We will refer back to this lemma frequently in our next one, demonstrating that
such a construction is sufficiently "nice" to demonstrate that cohomology is an
additive functor. 
The following theorem is also loosely based on proposition 3.1 in \cite{ruiter},
dualized for cohomology. 
\begin{theorem}\label{homfunc}
	$H^n:\textbf{CCH}(\mathscr A)\to A$ is an additive functor.
\end{theorem}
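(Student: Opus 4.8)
The plan is to establish functoriality (preservation of identities and of composition) and then preservation of the additive structure on hom-classes, at which point Lemma \ref{adconv} upgrades the latter to the statement that $H^n$ is additive. The engine driving all three verifications is the construction of Lemma \ref{homhom}: the cohomology map $H^n(f)$ is the unique morphism $h$ satisfying $h\circ \cok a = \cok a'\circ \alpha$, where $\alpha:\Ker d^{n+1}\to \Ker \partial^{n+1}$ is itself the unique morphism satisfying $f^n\circ \ker d^{n+1} = \ker \partial^{n+1}\circ \alpha$. Because $\ker \partial^{n+1}$ is monic (Lemma \ref{kermon}) and $\cok a$ is epi, every identity among cohomology maps can be reduced first to an identity among the auxiliary maps $\alpha$, and thence to an identity among the original cochain maps $f^n$. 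This two-stage cancellation is the uniform mechanism I will invoke throughout.

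For functoriality, first I would treat the identity: when $f = 1_{A^\bullet}$ the defining equation for $\alpha$ reads $\ker d^{n+1} = \ker d^{n+1}\circ \alpha$, so monic cancellation forces $\alpha = 1$, and then $h\circ \cok a = \cok a$ forces $h = 1$ by epi cancellation; hence $H^n(1_{A^\bullet}) = 1_{H^n(A^\bullet)}$. For composition, given $f:A^\bullet\to B^\bullet$ and $g:B^\bullet\to C^\bullet$ with auxiliary maps $\alpha_f, \alpha_g$ and cohomology maps $h_f, h_g$, I would compute $g^n\circ f^n\circ \ker d^{n+1} = g^n\circ \ker\partial^{n+1}\circ \alpha_f = \ker\varepsilon^{n+1}\circ \alpha_g\circ \alpha_f$ (writing $\varepsilon$ for the differential of $C^\bullet$), so that $\alpha_{g\circ f} = \alpha_g\circ \alpha_f$ by monic cancellation. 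Feeding this into the defining equation for $h_{g\circ f}$ and using $h_g\circ \cok a' = \cok a''\circ \alpha_g$ together with $h_f\circ \cok a = \cok a'\circ \alpha_f$ yields $(h_g\circ h_f)\circ \cok a = \cok a''\circ \alpha_g\circ \alpha_f = h_{g\circ f}\circ \cok a$; epi cancellation of $\cok a$ gives $H^n(g\circ f) = H^n(g)\circ H^n(f)$.

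For preservation of addition, I would note that the sum of cochain maps is computed degreewise, so $(f+g)^n = f^n + g^n$. Distributivity of composition over addition in the additive category $\mathscr A$ then gives $(f^n + g^n)\circ \ker d^{n+1} = \ker\partial^{n+1}\circ \alpha_f + \ker\partial^{n+1}\circ \alpha_g = \ker\partial^{n+1}\circ(\alpha_f + \alpha_g)$, so $\alpha_{f+g} = \alpha_f + \alpha_g$ by monic cancellation. The identical distributive argument one level up, now cancelling the epi $\cok a$, shows $(h_f + h_g)\circ \cok a = \cok a'\circ(\alpha_f+\alpha_g) = h_{f+g}\circ \cok a$, whence $H^n(f+g) = H^n(f) + H^n(g)$. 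By Lemma \ref{adconv} this is equivalent to $H^n$ being additive, completing the proof.

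The main obstacle I anticipate is bookkeeping rather than conceptual depth: I must keep the parallel towers of data (for $A^\bullet$, $B^\bullet$, and in the composition case $C^\bullet$) carefully distinguished, and verify at each stage that the morphism I produce genuinely satisfies the universal equation defining the relevant cohomology map, so that uniqueness legitimately identifies it with $H^n$ of the composite or sum. The one subtlety worth flagging is that every cancellation step relies on $\ker\partial^{n+1}$ being monic and $\cok a$ being epi — both guaranteed by Lemma \ref{kermon} — and on the degreewise description of composition and addition of cochain maps established in the construction of $\Kom(\mathscr A)$.
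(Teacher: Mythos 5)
Your proposal is correct and follows essentially the same strategy as the paper's proof: reduce each identity among cohomology maps to an identity among the auxiliary maps $\alpha$ via the uniqueness clauses, then cancel the monic $\ker\partial^{n+1}$ and the epi $\cok a$. Your write-up is in fact cleaner in places (you establish $\alpha_{g\circ f}=\alpha_g\circ\alpha_f$ explicitly before passing to cohomology, and you correctly cite monicity of $\ker d^{n+1}$ where the paper's identity-case argument misstates it as an epi), but the underlying mechanism is identical.
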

\begin{proof}
	First we must show that $H^n$ takes identity morphisms to identity
	morphisms. Consider $H^n(1)$ for an identity morphism $1$. Then, referring
	back to figure \ref{fig} and the definitions in lemma \ref{homhom}, we see that $\alpha$ in the definition of $H^1(1)$
	must be a morphism with $\ker d^{n+1} = \ker\partial^{n+1}\circ \alpha$. But
	in this case, $\ker\partial^{n+1} =\ker d^{n+1}$, so $\ker d^{n+1} = \ker
	d^{n+1}\circ \alpha$. But $\ker d^{n+1}$ is an epi; canceling gives us $1 =
	\alpha$. Then $\cok a'\circ \alpha = H^n(1)\circ \cok a$, so $\cok a' =
	H^n(1)\circ\cok a$. But since $d^n = \partial^n$, $a = a'$; since $\cok a =
	\cok a'$ is an epi, canceling gives $H^n(1) = 1_{H^n(A^\bullet)}$. 

	Now we must show that $H^n$ preserves composition. Consider $H^n(f\circ g)$,
	for cochain maps $f:A\to B$ and $g:B\to C$, denoting the differentials of $A,
	B$, and $C$ as $d, \delta,$ and $\partial$ respectively. Consider that
	$H^n(g\circ f)$ is the unique homomorphism with $\cok a''\circ
	\alpha_g\circ \alpha_f =
	H^n(f\circ g)\circ \cok a$, where $\alpha_f$ and $\alpha_g$ are the $\alpha$
	given in the proof of lemma \ref{homhom}, applied to $f$ and $g$ respectively.
	Similarly, $a:\ker(\cok d^n)\to \ker d^{n+1}$,  $a':\ker(\cok \delta^n)\to \ker
	\delta^{n+1}$, and $a'':\ker(\cok \partial^n)\to \ker \partial^{n+1}$, as in
	proof of lemma \ref{homhom}. But
	then $H^n(f)\circ H^n(g)\circ \cok a = H^n(f)\circ a'\circ \alpha_g =
	a''\circ \alpha_f\circ \alpha_g$, so $H^n(f\circ g) = H^n(f)\circ H^n(g)$.

	Finally, it will suffice to show $H^n(f+g) = H^n(f) + H^n(g)$. But if all
	morphisms are as in the proof of lemma \ref{homhom}, with $\alpha$ for $f$ and
	$\alpha'$ for $g$ (similarly for $\beta$), we have that $\cok a'\circ
	(\alpha + \alpha')\circ \cok(\ker d^n) = \cok a'\circ
	\alpha\circ \cok(\ker d^n) + \cok a'\circ
	\alpha'\circ \cok(\ker d^n)$ by bilenearity. Both of these are equal to
	$\cok a'\circ a'\circ (\beta + \beta') = 0$; as such, another application of
	distributivity yields that $\cok a'\circ \alpha + \cok a' \circ \alpha' = \cok
	a'\circ (\alpha + \alpha') = H^n(f+g)\circ
	\cok a$. A final application of distributivity shows that
	$(H^n(f)+H^n(g))\circ\cok a = H^n(f)\circ \cok a+H^n(g)\circ\cok a$, the
	definition of $H^n(f)$ simplifies this to $\cok a'\circ \alpha + \cok
	a'\circ \alpha'$, and the rest follows by uniqueness.	
\end{proof}

We have now fully defined cohomology as an additive functor, a very powerful
characterization and one that will lead directly to the construction of derived
functors, and the proof of several instances of their well-definedness. The next
tool that we will need to complete our definition is an injective resolution.
This will allow us to replace a single, complicated object with a simple
resolution which we understand well, a technique which forms the basis for
derived functors.

\subsection{Injective Resolutions}
\subsubsection{More on Exact Sequences}
Before we construct our injective resolutions, we will need a few more 
lemmas to prove further statements. The first of
these dualizes the definition of an exact sequence:
\begin{lemma}\label{coimage}
	If a sequence $A\xto f B\xto g C$ is exact at $B$, then $\cok f = a\circ
	\cok(\ker g)$
\end{lemma}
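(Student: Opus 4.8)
The plan is to reduce the claim to a single application of the universal property of the cokernel $\cok(\ker g)$. By definition, $\cok(\ker g)$ is universal among morphisms out of $B$ that kill $\ker g$ on the right; so in order to factor $\cok f$ as $a\circ \cok(\ker g)$ it suffices to check that $\cok f\circ \ker g = 0$, after which the desired $a$ together with its uniqueness drops out immediately.

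To verify that equation, I would unwind the hypothesis of exactness at $B$. Exactness means precisely that the subobjects $\ker(\cok f) = \im f$ and $\ker g$ coincide, so they differ by an isomorphism $i$ with $\ker g = \im f\circ i$ (equivalently $\ker g = \ker(\cok f)\circ i$, since $\im f$ is by definition $\ker(\cok f)$). Meanwhile, the defining property of a kernel gives $\cok f\circ \ker(\cok f) = 0$. Composing on the right with $i$ then yields $\cok f\circ \ker g = \cok f\circ \ker(\cok f)\circ i = 0$, which is exactly what the first step requires.

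With $\cok f\circ \ker g = 0$ in hand, the universal property of $\cok(\ker g)$ produces a unique $a\colon \Cok(\ker g)\to \Cok f$ with $\cok f = a\circ \cok(\ker g)$, completing the argument. I do not anticipate a serious obstacle: the only point demanding care is the bookkeeping of the equality of subobjects, namely orienting the isomorphism $i$ so that $\ker g$ factors through $\im f = \ker(\cok f)$ rather than the reverse, since this is what allows the kernel identity $\cok f\circ \ker(\cok f) = 0$ to do the work. If one wished to upgrade the statement to the assertion that $a$ is itself an isomorphism (so that $\cok f$ and $\coim g$ agree as quotient objects, fully dualizing exactness), one would run the symmetric argument to factor $\cok(\ker g)$ through $\cok f$ and then invoke Lemma \ref{unique}; but the lemma as stated asks only for the factorization above.
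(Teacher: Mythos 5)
Your proposal is correct and follows essentially the same route as the paper's own proof: use exactness to write $\ker g = \ker(\cok f)\circ i$ for an isomorphism $i$, deduce $\cok f\circ\ker g = \cok f\circ\ker(\cok f)\circ i = 0$, and then invoke the universal property of $\cok(\ker g)$ to obtain the factorization $\cok f = a\circ\cok(\ker g)$. The only cosmetic difference is that the paper also records (via Lemma \ref{chain}) the reverse factorization of $\cok(\ker g)$ through $\cok f$, which your closing remark correctly identifies as the ingredient needed to upgrade $a$ to an isomorphism but which is not required for the statement as given.
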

\begin{proof}
	By lemma \ref{chain}, $\cok(\ker g)$ factors through $\cok f$. By exactness,
	$\ker g = \ker(\cok f)\circ i$ for an isomorphism $i$; since $\cok f \circ
	\ker(\cok f)\circ i = 0$,
	we have that $\cok f$ must factor through $\cok(\ker(\cok f)\circ i)
	=\cok(\ker g)$ (and must do so as $a\circ \cok(\ker g)$).
\end{proof}
Finally, we give one more lemma allowing us to characterize monics and epis
using only exact sequences.
\begin{lemma}\label{insur}
	The sequence 
		$$0\xto 0 A\xto f B$$
		is exact if and only if $f$ is monic. The sequence
		$$A\xto f B\xto 00$$
		is exact if and only if $f$ is epi.
\end{lemma}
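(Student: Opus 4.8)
The plan is to unwind the definition of exactness at the relevant object and reduce each equivalence to the characterization of monics and epis via vanishing kernels and cokernels recorded in Corollary \ref{monicaa}. In both cases the real content is a single computation identifying the image of a zero map, or the kernel of a map into the zero object, with the appropriate trivial subobject.

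For the sequence $0 \xto 0 A \xto f B$, exactness at $A$ asserts that the subobject $\ker(\cok 0)$ associated with the image of the zero map $0 \to A$ coincides with the subobject $\ker f$. First I would identify this image: since every morphism out of $A$ kills the zero map $0 \to A$ (any composite $g \circ 0$ factors through the zero object and is therefore zero), the cokernel $\cok 0$ is the identity $1_A$, and by Corollary \ref{isoker} its kernel is $0$. Hence $\im 0 = \ker(\cok 0)$ is the zero subobject, and exactness at $A$ is precisely the statement $\ker f = 0$. By the monic half of Corollary \ref{monicaa} this holds if and only if $f$ is monic.

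For the sequence $A \xto f B \xto 0 0$, exactness at $B$ asserts that $\im f = \ker(\cok f)$ agrees with the kernel $\ker(0)$ of the unique map $B \to 0$. Dually to the above, since $0$ is terminal every morphism into $B$ is killed by $B \to 0$, so $\ker(0) = 1_B$, the full subobject; thus exactness says $\ker(\cok f) = 1_B$. I would then show $\ker(\cok f) = 1_B$ if and only if $\cok f = 0$: the forward direction cancels the isomorphism $\ker(\cok f)$ in the defining identity $\cok f \circ \ker(\cok f) = 0$, while the reverse direction observes that the kernel of the zero morphism $B \to \Cok f$ is again the identity. By the epi half of Corollary \ref{monicaa}, $\cok f = 0$ if and only if $f$ is epi, which completes the dual claim.

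The main obstacle is bookkeeping around subobjects rather than any deep argument: exactness is phrased as an equality of subobjects, identified only up to isomorphism (Lemma \ref{unique}), so each step must invoke the relevant universal property instead of manipulating elements. In particular, care is needed to match ``equals the zero subobject'' with ``$\ker f = 0$'' and ``equals the full subobject $1_B$'' with ``$\cok f = 0$'', so that Corollary \ref{monicaa} applies cleanly; once the two small computations $\cok(0 \to A) = 1_A$ and $\ker(B \to 0) = 1_B$ are in hand, the rest is immediate.
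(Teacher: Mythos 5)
Your proof is correct and follows essentially the same route as the paper: identify $\cok(0\to A)$ with $1_A$, apply Corollary \ref{isoker} to get $\ker(\cok 0)=0$, and conclude via Corollary \ref{monicaa}. The only difference is that you carry out the epi case explicitly (via $\ker(B\to 0)=1_B$ and the equivalence $\ker(\cok f)=1_B \iff \cok f = 0$) where the paper simply appeals to duality; this is the same argument spelled out.
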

\begin{proof}
	Suppose $0\xto 0 A\xto f B$ is exact. Then $\ker\cok 0$ differs by an
	isomorphism from $\ker f$. Clearly the cokernel of 0 is the identity on $A$,
	as every morphism composed with zero is zero, and each morphism which
	factors through $A$ factors through $1_A$. But the kernel of an identity is
	0 by corollary \ref{isoker} thus the kernel of $f$ differs by an isomorphism
	from zero, and as such is zero. By corollary \ref{monicaa}, this means that $f$
	is a monic. Conversely, suppose $f$ is monic. Then the
	kernel of $f$ is zero by corollary \ref{monicaa}. But the cokernel of $0$ is $1_A$, as shown above, and
	the kernel of $1_A$ is zero by corollary \ref{isoker}. Thus, $\ker\cok 0 = 0
	= \ker f$, and the sequence is exact. A dual argument holds
	for epis.
\end{proof}
	%
	%
We now define projective and injective objects and resolutions. Although this definition may
seem obscure, it is necessary to build resolutions of objects in a way that
captures some representation their internal structure.
\begin{definition}
	A \textbf{projective} object $P$ is one which, for any epi $f:B\to C$ together
	with a map $\gamma:P\to Y$, there is a $\beta: P\to B$ such that $f\circ \beta
	= \gamma$. Similarly, an \textbf{injective} object $I$ is one which, for any
	monic $g:A\to B$ and any $\alpha: A\to I$ there is a $\beta:B\to I$ such
	that $\beta\circ g = \alpha$. An abelian category $\mathscr A$ has "enough
	projectives" if and only if for any object $A$ there is a epi $f:P\to
	A$ with $P$ projective. Similarly, $\mathscr A$ has enough injectives if and
	only if for any object $A$ there is a monic $f:A\to I$ with $I$
	injective. 
\end{definition}
\begin{definition}
		An injective (resp. projective) \textbf{resolution} of an object $A$ in an
		abelian category $\mathscr A$ is an exact cochain complex (resp. chain complex)
		$$0\to A\to I^0\to I^1 \to I^2\to I^3\to...$$
		$$\text{(respectively }...\to P_3 \to P_2\to P_1\to A\to 0 \text{ )}$$
		with each $I^i$ injective ($P_i$ projective).
\end{definition}

Given minimal conditions outlined in the above definitions, we can always form
injective resolutions. Eventually, this will guarantee the existence of the
derived functor; more generally, it permits a plethora of techniques (most of
which are beyond this paper) which can be used to simplify complicated objects.
\begin{theorem}
	If an abelian category $\mathscr A$ has enough injectives, we can form an
	injective resolution of any object $A$.
\end{theorem}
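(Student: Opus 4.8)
The plan is to build the resolution one term at a time by a recursive construction, using the enough-injectives hypothesis to embed each successive cokernel into an injective object. First I would invoke enough injectives for $A$ itself to obtain a monic $\epsilon\colon A\to I^0$ with $I^0$ injective; this furnishes the initial segment $0\to A\xto{\epsilon} I^0$, which is exact at $A$ precisely because $\epsilon$ is monic, by lemma \ref{insur}.

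For the recursive step, suppose the complex has been constructed out to $I^n$, with the most recent differential $d^{n-1}\colon I^{n-1}\to I^n$ (adopting the convention $d^{-1}=\epsilon$ and $I^{-1}=A$ in the base case). I would form the cokernel $\cok d^{n-1}\colon I^n\to \Cok d^{n-1}$ and apply enough injectives to $\Cok d^{n-1}$, obtaining a monic $m^{n+1}\colon \Cok d^{n-1}\to I^{n+1}$ with $I^{n+1}$ injective. Setting $d^n = m^{n+1}\circ\cok d^{n-1}$ extends the construction one step further. That this yields a genuine cochain complex is immediate: $d^n\circ d^{n-1} = m^{n+1}\circ\cok d^{n-1}\circ d^{n-1} = m^{n+1}\circ 0 = 0$ by the defining property of the cokernel.

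The crux is exactness at each $I^n$, which by definition requires $\ker(\cok d^{n-1})$ and $\ker d^n$ to agree as subobjects. The key observation is that precomposing a morphism by a monic on the left does not change its kernel: for the monic $m^{n+1}$ and any $t$, lemma \ref{monica} gives $m^{n+1}\circ(\cok d^{n-1}\circ t)=0$ if and only if $\cok d^{n-1}\circ t = 0$, so $\ker d^n = \ker(m^{n+1}\circ\cok d^{n-1})$ satisfies the same universal property as $\ker(\cok d^{n-1})$ and hence equals it as a subobject. Since $\ker(\cok d^{n-1})$ is by definition the image $\im d^{n-1}$, this is exactly the statement $\im d^{n-1} = \ker d^n$, i.e.\ exactness at $I^n$; exactness at $A$ is the base case already handled by lemma \ref{insur}.

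I expect the main obstacle to be not any single hard computation but the careful categorical bookkeeping, in particular isolating and justifying the monic-cancellation fact $\ker(m\circ h)=\ker h$, which stands in for the familiar set-theoretic statement that a map and its corestriction onto its image share a kernel. A secondary point requiring care is phrasing the recursion so that the base step ($d^{-1}=\epsilon$) and the general step are treated uniformly, and confirming that the resulting $\mathbb{Z}$-indexed family genuinely assembles into the exact cochain complex demanded by the definition of an injective resolution.
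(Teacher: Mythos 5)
Your construction is exactly the paper's: embed $A$ into an injective $I^0$ via enough injectives, then at each stage take the cokernel of the previous differential, embed its target into a fresh injective, and define the next differential as the composite; the verification that successive composites vanish is word-for-word the same, as is the appeal to lemma \ref{insur} for exactness at $A$. Where you genuinely diverge is in the exactness check at each $I^n$. The paper establishes the two factorizations separately: $\im d^n$ factors through $\ker d^{n+1}$ by lemma \ref{chain}, and conversely it extracts $\cok d^n\circ\ker d^{n+1}=0$ from the factorization $\cok d^n = f\circ\coim d^{n+1}$ supplied by lemma \ref{factoring}, concluding that the two subobjects differ by an isomorphism. You instead observe that composing with the monic $m^{n+1}$ on the outside does not change which morphisms are annihilated (lemma \ref{monica}), so $\ker d^n$ and $\ker(\cok d^{n-1})=\im d^{n-1}$ satisfy the identical universal property and coincide as subobjects in a single step. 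Your route is shorter and arguably cleaner --- it produces the required identification of subobjects directly from the uniqueness of kernels rather than by exhibiting mutual factorizations --- while the paper's version exercises the factoring lemmas it has already built. Both arguments are correct, and your closing remarks correctly identify the only points needing care (the base case of the recursion and the justification of $\ker(m\circ h)=\ker h$ for $m$ monic).
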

\begin{proof}
	Since $\mathscr A$ has enough injectives, there is a monic $d^0:A\to I^0$ for
	some $I^0$. Consider the cokernel $\cok d^0: I^0\to \Cok d^0$. This cokernel
	has a target $\Cok d^0$, which in turn admits a monic $a^0:\Cok d^0\to I^1$,
	as $\mathscr A$ has enough injectives. Define $d^1$ to be the composition
	$a^0\circ \cok d^0$, and proceed inductively. Supposing $d^n$ exists, let
	$a^n$ be the monic mapping $\Cok d^n$ to some injective which we denote
	$I^{n+1}$, as given by the "enough injectives" condition. Define
	$d^{n+1}:I^n\to I^{n+1}$ to be $d^{n+1} \coloneq a^n\circ \cok d^n$. This
	inductively defines an injective resolution:
		$$0\xto0 A\xto{d^0}I^0\xto{d^1}I^1\xto{d^2}I^2\xto{d^3} ...$$
	With each $I$ injective, and $d^{n+1}\circ d^n = a^n\circ \cok d^n \circ d^n
	= a^n\circ 0 = 0$. These conditions ensure that $A\to I^\bullet$ is a cochain complex. 
	Since $d^0$ is monic, lemma \ref{insur} guarantees that the sequence is
	exact at $A$. To show it is exact at each $I^n$, note that each
	$\im d^n$ factors through $\ker d^{n+1}$ by lemma \ref{chain}.
	Conversely, $d^{n+1} = a^n\circ \cok d^n$ with $a^n$ monic and $\cok d^n$
	epi, so $\cok d^n$ factors as $f\circ \coim d^{n+1}$ by lemma
	\ref{factoring}. But then $\cok
	d^n\circ \ker d^{n+1} = f\circ \coim d^{n+1}\circ \ker d^{n+1} = 0$,
	so $\ker d^{n+1}$ factors through $\im d^n$. Thus $\ker d^{n+1}$ and
	$\im d^n$ differ by an isomorphism, and so $\ker d^{n+1}$ differs by
	an isomorphism from $\im d^n$. Therefore $I^\bullet$ is an exact
	cochain complex of injective objects, or an injective resolution.
\end{proof}

A dual construction can be made for projective resolutions; however, this paper
does not concern them, so it is omitted for brevity. There is no guarantee that
this injective resolution is unique, which means it is
impossible to use it to define a functor. However, our next lemma provides a way
of relating two such injective resolutions. First, however, we must define a
certain equivalence of cochain maps, which will allow us to compare such maps.
Eventually, this equivalence will prove to induce equality under homology.

\begin{definition}
	Let $f,g:A^\bullet\to B^\bullet$, where $A^\bullet$ has differentials $d^n$
	and $B^\bullet$ has differentials $\partial$.
	Two cochain maps $f$ and $g$ are \textbf{homotopic} if  there is an $s^i: I^i\to J^{i-1}$ such that
	$f^n - g^n = \partial^n\circ s^{n-1} + s^{n}\circ d^{n+1}$.
\end{definition}

\begin{theorem}[Comparison Lemma, adopted from \cite{rotman}]\label{compare}
	For objects $A$ and $B$ in an abelian category with enough injectives, and
	complexes of injective resolutions $A\to I^\bullet$ and
	$B\to J^\bullet$, a map
	$f:A\to B$ induces a cochain map $f^i:I^i\to J^i$ with $f^n\circ d^n =
	\partial^{n}\circ f^{n-1}$ (where $d^n$ is the
	differential of $I^\bullet$ and $\partial^n$ the differential of
	$J^\bullet$). Any two cochain
	maps $f^i$ and $g^i$ induced by $f$ are homotopic. 
\end{theorem}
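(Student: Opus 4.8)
The Comparison Lemma has two parts: existence of a lifting cochain map, and uniqueness up to homotopy. Both are proved by the same engine — induction up the resolution, using injectivity of the targets $J^n$ to extend maps one degree at a time — so let me plan them together.

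For the existence of $f^\bullet$, the plan is to construct the $f^n$ inductively. The base case $n=0$ is where the injective property first appears: I have the monic $d^0_A : A \to I^0$ (writing $d^0$ for the augmentation of the resolution), the map $f : A \to B$, and the monic $d^0_B : B \to J^0$. Composing gives $d^0_B \circ f : A \to J^0$. Since $J^0$ is injective and $d^0_A$ is monic, injectivity furnishes a map $f^0 : I^0 \to J^0$ with $f^0 \circ d^0_A = d^0_B \circ f$, which is exactly the required commuting square in degree $0$. For the inductive step, suppose $f^{n-1}$ and $f^n$ are built with the squares commuting. I want $f^{n+1} : I^{n+1} \to J^{n+1}$ with $f^{n+1} \circ d^{n+1} = \partial^{n+1} \circ f^n$. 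The key observation is that the map $\partial^{n+1} \circ f^n \circ d^n$ vanishes (it equals $\partial^{n+1}\circ \partial^n \circ f^{n-1} = 0$ using the square and $\partial^{n+1}\partial^n = 0$), so $\partial^{n+1}\circ f^n$ factors through $\cok d^n$, and by lemma \ref{chain} and exactness of $I^\bullet$ the differential $d^{n+1}$ essentially is the inclusion $\ker d^{n+2} \circ a \circ \coim d^{n+1}$, so $\im d^n$ sits inside $\ker d^{n+1}$. The map $\partial^{n+1}\circ f^n$ thus descends along the epi $\coim d^{n+1} = \cok(\ker d^{n+1})$ — but by exactness $\ker d^{n+1}$ agrees with $\im d^n$ up to isomorphism — yielding a map out of $\ima d^{n+1}$, which is a subobject (monic) of $I^{n+1}$; injectivity of $J^{n+1}$ then extends this along that monic to the desired $f^{n+1}$.

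For the uniqueness up to homotopy, I would apply the same machinery to the difference $h^\bullet = f^\bullet - g^\bullet$ (valid since $\hom$-classes are abelian groups, and $h$ is again a cochain map lifting $f - f = 0$). It suffices to exhibit a homotopy $s^\bullet$ contracting $h^\bullet$, i.e. $h^n = \partial^n \circ s^{n-1} + s^n \circ d^{n+1}$. Because $h$ lifts the zero map, $h^0 \circ d^0_A = \partial^0_B \circ 0 = 0$, so $h^0$ factors through $\cok d^0_A$; composing with exactness of $I^\bullet$ at $I^0$ lets me view $h^0$ as a map out of a quotient that injects (via the relevant monic) so that injectivity of $J^0$ produces the first $s^0$ with $h^0 = s^0 \circ d^1$. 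Inductively, assuming $s^{n-1}$ is built, I consider $h^n - \partial^n \circ s^{n-1}$ and check it kills $d^n$ (using the inductive relation and $\partial^n \partial^{n} = 0$ type cancellations), so it factors through the cokernel of $d^n$; exactness identifies this cokernel's relevant subobject inside $I^{n+1}$, and injectivity of $J^n$ extends along the monic to produce $s^n$ with the homotopy identity in degree $n$.

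The main obstacle, in both halves, is the same bookkeeping step: translating the set-theoretic move ``the map vanishes on the image, hence descends to the quotient and extends'' into the categorical language of this paper, where ``image'' is $\ker(\cok-)$, ``quotient'' is a cokernel, and the passage from a map vanishing on $\im d^n$ to a map defined on the cokernel must be routed through lemma \ref{chain}, lemma \ref{me}, and exactness (so that $\ker d^{n+1}$ and $\im d^n$ differ by an isomorphism). Getting the monic against which injectivity is invoked to be exactly the right one — so that the extended map genuinely satisfies the commuting square or homotopy identity rather than merely a weaker cancellation — is the delicate part; once the correct factorization is set up, each injectivity invocation is routine.
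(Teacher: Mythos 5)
Your proposal follows essentially the same route as the paper's proof: both halves proceed by induction up the resolution, use the vanishing of $\partial^{n+1}\circ f^n\circ d^n$ (respectively of $(f^{n+1}-g^{n+1}-\partial^{n+1}\circ s^n)\circ d^{n+1}$) to factor the relevant map through $\cok d^n$, identify that cokernel with the coimage of the next differential via exactness and lemma \ref{coimage}, and then extend along the monic $\ker(\cok d^{n+1})$ using injectivity of the target, exactly as the paper does. Your treatment of the homotopy's base case (building $s^0$ by the same cokernel-plus-injectivity machinery rather than declaring it zero) is, if anything, slightly more careful than the paper's, but the underlying argument is identical.
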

\begin{proof}
	For the base case $n=0$, the differentials $d^0:A\to I^0$ and $\partial^0:B\to J^0$
	are monic by lemma \ref{insur}, so applying the definition of an injective object to $d^0$ and the
	composition $\partial^0\circ f$ gives a morphism $f^0:I^0\to J^0$ with
	$f^0\circ d^0 = \partial^0\circ f$. We then construct
	each subsequent $f^n$ inductively, assuming $f^{i}$ for $-1 \leq i\leq n$
	are constructed such that $f^{n}\circ d^n = \partial^{n}\circ f^{n-1}$.

	Note that $\partial^{n+1}\circ f^{n}\circ d^{n} = \partial^{n+1}\circ
	\partial^n\circ f^{n-1}
	= 0$, so $(\partial^{n+1}\circ f^{n})$ factors through $\cok(d^{n})$,
	which by the exactness of the injective resolution and lemma \ref{chain} factors through
	$\cok(\ker(d^{n+1}))$. But then we can write $(\partial^{n+1}\circ
	f^n)$ as $\eta\circ \cok(\ker d^{n+1})$ for some morphism
	$\eta:\Ker(\cok d^{n+1})\to J^{n+1}$ by lemma \ref{coimage}. Applying the definition of the
	injective object $J^{n+1}$ with morphisms $\eta:\Ker(\cok d^{n+1})\to
	J^{n+1}$ and monic $\ker(\cok d^{n+1}):\Ker(\cok d^{n+1})\to I^{n+1}$ gives
	a morphism $\beta:I^{n+1}\to J^{n+1}$ with $\beta\circ \ker(\cok d^{n+1}) =
	\eta$, which (composing on the right) gives $\beta\circ \ker(\cok
	d^{n+1})\circ \cok(\ker d^{n+1}) = \beta\circ d^{n+1} = \eta\circ \cok(\ker
	d^{n+1}) = \partial^{n+1}\circ f^n$. Defining $f^{n+1}\coloneq \beta$ gives
	us our intended construction.

	It will thus suffice to show uniqueness up to homotopy equivalence. Let
	$f^{i}$ and $g^i$ be maps from $I^i$ to $J^i$, satisfying the necessary
	conditions. Construct terms of a map $s^i$ inductively. The base case can be
	shown easily by letting $s^{-1}$ and $s^0$ both be zero. Inductively, assume
	that $f^n - g^n = \partial^{n} \circ s^{n-1} + s^{n}\circ d^{n+1}$. 

	Note
	that $(f^{n+1} - g^{n+1} - \partial^{n+1}\circ s^{n})\circ d^{n+1} = (f^{n+1} -
	g^{n+1})\circ d^{n+1} - \partial^{n+1}\circ s^{n}\circ d^{n+1}$. By the
	definition of $s^{n}$ we can rewrite as $(f^{n+1} -
	g^{n+1})\circ d^{n+1} - \partial^{n+1}(f^n-g^n - \partial^{n}\circ s^{n-1})$.
	Expanding, we obtain $f^{n+1}\circ d^{n+1} -
	g^{n+1}\circ d^{n+1} - \partial^{n+1}\circ f^n+\partial^{n+1}\circ g^n +
	\partial^{n+1}\circ \partial^{n}\circ s^{n-1}$. The last term cancels, and
	we re-arange (by the fact $\hom(I^n, J^{n+1})$ is an abelian group) to obtain:

	$$f^{n+1}\circ d^{n+1} -\partial^{n+1}\circ f^n
	-g^{n+1}\circ d^{n+1}  +\partial^{n+1}\circ g^n$$
	But by the definition of a cochain map, $f^{n+1}\circ d^{n+1} =
	\partial^{n+1}\circ f^{n}$, and a similar equivalence holds for $g$, so the
	whole sum is zero. 
	Thus, $\alpha \coloneq f^{n+1} - g^{n+1} - \partial^{n+1}\circ s^{n}$
	factors by the definition of a cokernel as $\alpha = a\circ \cok d^{n+1}$. But by exactness
	and lemma \ref{coimage}, $\alpha = \eta \circ \cok \ker d^{n}$. We then
	apply the definition of an injective object, noting that $\eta:\Ker \cok d^{n+1} \to
	J^{n}$, and that $\ker(\cok d^{n+1}):\Ker \cok d^{n+1}\to I^{n+1}$ is an
	injection. Thus the definition of injectivity gives a morphism
	$\beta:I^{n+1}\to J^{n}$ with $\beta \circ \ker(\cok d^{n+1}) = \eta$.
	Composing $\cok(\ker d^{n+1})$ on the right gives us that
	$\beta\circ \ker(\cok d^{n+1})\circ \cok(\ker d^{n+1}) = \eta \circ \cok(\ker
	d^{n+1})$. But then lemma \ref{me} and the definition of $\eta$ gives us
	$\beta\circ d^{n+1} = f^{n+1} - g^{n+1} - \partial^{n+1}\circ s^{n}$, or that 
	$\beta\circ d^{n+1}+ \partial^n\circ s^{n-1} = f^{n} - g^{n}$. We thus
	define $s^n \coloneq \beta$, and we have constructed a cochain homotopy
	inductively.
\end{proof}

\subsection{Derived Functors}

This brings us to our final construction, the derived functor. Briefly, the
right derived functor is a method of "fixing" or "extending" certain functors
which take monics to monics, but do not always take epis to epis. However,
beyond this, the right derived functor is a method of characterizing many types
of cohomology; group cohomology or $\Ext(A, -)$, for example, is the right
derived functor of $\hom(A, -)$. A dual construction, the left derived functor,
is possible by passing into the opposite category and performing the same
construction; we will omit a rigorous proof.
\begin{definition}
	Choose and fix an injective resolution $I^\bullet$. Given an (additive)
	functor $F:\mathscr A\to \mathscr B$ where
	$\mathscr A$ is an abelian category with enough injectives and $\mathscr B$
	is an abelian category, the $i$th \textbf{right derived functor} $R^i F(A)$ for an
	object $A$ is given by $R^iF(A) = H^i(F(0\to A\to I^\bullet))$. The $i$th
	right derived functor likewise acts on a morphism by the rule $R^iF(f) =
	H^n(f^\bullet)$, where $f^\bullet$ is the cochain map induced by on injective
	resolutions $I^\bullet$ of $A$, and $J^\bullet$ of $B$.
\end{definition}
\begin{theorem}
	Given a left exact functor $F$ and a choice of injective resolution
	$I^\bullet$ for every object $A$, the right derived functor $R^i F$ obtained
	using $I^\bullet$ is an additive functor.
\end{theorem}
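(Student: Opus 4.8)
The plan is to realize $R^iF$ as a composite of functorial operations and to show that the only ambiguity in its definition — the non-uniqueness of injective lifts — disappears after passing to cohomology. On objects, $R^iF$ sends $A$ to $H^i(F(0\to A\to I^\bullet))$ for the fixed resolution $I^\bullet$; on a morphism $f:A\to B$ it sends $f$ to $H^i(F(f^\bullet))$, where $f^\bullet:I^\bullet\to J^\bullet$ is a cochain map lifting $f$ produced by the Comparison Lemma (Theorem \ref{compare}). First I would record that these expressions are well-formed: since $F$ is additive it induces homomorphisms on the hom-groups (Lemma \ref{adconv}), so it preserves zero morphisms and, being a functor, composition; hence $F(d^{n+1})\circ F(d^n)=F(d^{n+1}\circ d^n)=F(0)=0$, so $F$ carries the resolution to an honest cochain complex on which $H^i$ is defined (Theorem \ref{homfunc}).

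The main obstacle is that $f^\bullet$ is determined only up to homotopy (Theorem \ref{compare}), so I must show that homotopic cochain maps induce the same morphism after applying $F$ and then $H^i$. This rests on two facts. First, $F$ preserves homotopies: applying $F$ componentwise to $f^n-g^n=\partial^n\circ s^{n-1}+s^n\circ d^{n+1}$ and using additivity gives $F(f^n)-F(g^n)=F(\partial^n)\circ F(s^{n-1})+F(s^n)\circ F(d^{n+1})$, so $F(f^\bullet)$ and $F(g^\bullet)$ are homotopic via $F(s^\bullet)$. Second, and this is the heart of the proof, $H^i$ annihilates null-homotopic maps; since $H^i$ is additive (Theorem \ref{homfunc}) it suffices to show $H^i(h^\bullet)=0$ whenever $h^n=\partial^n\circ s^{n-1}+s^n\circ d^{n+1}$. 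Here I would argue in the notation of Lemma \ref{homhom}: the comparison map $\alpha$ obeys $\ker\partial^{n+1}\circ\alpha=h^n\circ\ker d^{n+1}$, and since $d^{n+1}\circ\ker d^{n+1}=0$ the second homotopy term drops, leaving $h^n\circ\ker d^{n+1}=\partial^n\circ s^{n-1}\circ\ker d^{n+1}$. Factoring $\partial^n=\ker\partial^{n+1}\circ a'\circ\coim\partial^n$ (Lemma \ref{me} together with the definition of $a'$) and cancelling the monic $\ker\partial^{n+1}$ gives $\alpha=a'\circ\gamma$ for $\gamma=\coim\partial^n\circ s^{n-1}\circ\ker d^{n+1}$; hence $\cok a'\circ\alpha=\cok a'\circ a'\circ\gamma=0$, and as $H^i(h^\bullet)\circ\cok a=\cok a'\circ\alpha=0$ with $\cok a$ epi (being a cokernel), I conclude $H^i(h^\bullet)=0$.

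With this invariance in hand the remainder is formal. The identity cochain map $1_{I^\bullet}$ lifts $1_A$ and the composite $g^\bullet\circ f^\bullet$ lifts $g\circ f$, so any chosen lifts agree with these up to homotopy; combining the invariance above with the functoriality of $F$ and of $H^i$ (Theorem \ref{homfunc}) gives $R^iF(1_A)=1_{R^iF(A)}$ and $R^iF(g\circ f)=R^iF(g)\circ R^iF(f)$. For additivity, $f^\bullet+g^\bullet$ is a cochain map lifting $f+g$, hence homotopic to any chosen lift, while $F(f^\bullet+g^\bullet)=F(f^\bullet)+F(g^\bullet)$ componentwise by additivity of $F$; applying the additive functor $H^i$ then yields $R^iF(f+g)=H^i(F(f^\bullet)+F(g^\bullet))=H^i(F(f^\bullet))+H^i(F(g^\bullet))=R^iF(f)+R^iF(g)$. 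By Lemma \ref{adconv} this last identity is exactly the statement that $R^iF$ preserves biproducts, i.e. that $R^iF$ is an additive functor, as required.
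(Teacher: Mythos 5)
Your proposal is correct and follows essentially the same route as the paper: reduce well-definedness of $R^iF$ on morphisms to showing that $H^n$ annihilates null-homotopic cochain maps, verify this by exhibiting the induced $\alpha$ of Lemma \ref{homhom} as a factorization through $a'$ so that $\cok a' \circ \alpha = 0$ and then cancelling the epi $\cok a$, and finally deduce functoriality and additivity formally from Theorem \ref{homfunc} and the additivity of $F$. The only cosmetic difference is that you treat the homotopy $\partial^n\circ s^{n-1}+s^n\circ d^{n+1}$ in a single computation (observing that the second term dies against $\ker d^{n+1}$), whereas the paper splits it into two summands via additivity of $H^n$ and kills each separately; your one-pass version is, if anything, slightly cleaner, since the individual summands need not themselves be cochain maps.
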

Referring to figure \ref{fig} frequently during the following proof is advised.
\begin{proof}
	First, we must show that $R^iF(f)$ is well defined. The comparison lemma
	states that any two cochain maps $f$ and $f'$ derived from a morphism $F(g)$
	for some $g:A\to B$ will
	be cochain homotopic; given a cochain map $f$ induced by a map $g$,
	the cochain map $F(f)$ will also be a cochain map over $F(g)$ as
	$F(f)^0\circ F(d^0) = F(f^0\circ d^0) = F(\partial^{n}\circ g) =
	F(\partial^n)\circ F(g)$. Thus, it suffices to show that $H^n(d^n\circ s^{n-1} +
	s^n\circ \partial^{n+1}) = H^n(d^n\circ s^{n-1}) + H^n(
	s^n\circ \partial^{n+1}) = 0$. But $\alpha =  a'\circ
	\cok(\ker \partial^n) \circ s^{n-1} \circ \ker d^{n+1}$ is a morphism
	satisfying $\ker \partial^{n+1}\circ\alpha = f^n\circ \ker d^{n+1} $, where
	$f^n = \partial^n\circ s^{n-1}$ (as seen in figure \ref{fig}). Then $H^n(\partial^n\circ s^{n-1})\circ
	\cok a = \cok a' \circ a'\circ
	\cok(\ker \partial^n) \circ s^{n-1} \circ \ker d^{n+1} = 0$. Since $\cok a$
	is an epi, lemma \ref{monica} states that $H^n(\partial^n\circ s^{n-1}) =
	0$. 

	Now consider an $\alpha$ as constructed in figure \ref{fig} corresponding to
	$s^n\circ d^{n+1}$. Noting that $\ker\partial^{n+1} \circ \alpha = 
	s^n\circ d^{n+1}\circ\ker d^{n+1} = 0$; by lemma \ref{monica}, this means
	$\alpha = 0$. But then $\cok a'\circ \alpha = H^n(s^n\circ d^{n+1}) \circ
	\cok a = 0$. Since $\cok a$ is epi, a final reference to lemma \ref{monica}
	gives us that $H^n(s^n\circ d^{n+1}) = 0$. Thus we have show than any map
	which is homotopic to zero has homology zero, and therefore shown that
	$R^iF(f)$ is well defined. 

	Now we must show that $R^i F$ is a functor. First, $R^iF(1) = H^i(i)$, where
	$i$ is homotopic to the identity. But by the above, this means that
	$R^iF(1)$ is the identity, as in tended. Similarly, given two composable
	morphisms $f$ and $g$, $R^iF(f\circ g) = H^i(F(f)'\circ F(g)')$, where
	$F(f)'$ and $F(g)'$ denote the chain maps over $F(f)$ and $F(g)$,
	respectively. However, theorem \ref{homfunc} shows that $H^i(F(f)'\circ
	F(g)') = H^i(F(f)')\circ H^i(F(g)')$. 

	Finally, we must show that $R^iF$ is additive. By theorem \ref{adconv}, it
	will suffice to show that for $f, g:A\to B$ morphisms, $R^iF(f+g) =
	R^iF(f)+R^iF(g)$. But since $F$ is additive, $R^iF(f+g) = H^i(F(f+g)') = H^i(F(f)'+F(g)')$ (where
	primes again denote passage to the the chain map over a morphism). Since
	homology is an additive functor, we then have $R^iF(f+g) =
	H^i(F(f)')+H^i(F(g)')$, as intended.
\end{proof}

It is tedious and inelegant to choose an injective resolution every time a
derived functor is to be computed; moreover, it makes computation difficult. Our
following theorem shows that this choice is immaterial, as computing the derived
functor twice with different injective resolutions yields two
isomorphic results. This completes our construction.

\begin{theorem}
	Let $R^nF(A)$ denote the right derived functor of an object $A$ computed using an injective resolution
	$I^\bullet$, and $\hat R^n F(A)$ denote the right derived functor of the
	same object computed with
	injective resolution $J^\bullet$. Then $R^n F (A) \cong \hat R^n F(A)$.
\end{theorem}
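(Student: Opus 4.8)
The plan is to use the Comparison Lemma (Theorem~\ref{compare}) to produce cochain maps between $I^\bullet$ and $J^\bullet$ in both directions, and then to check that applying $F$ and afterwards $H^n$ turns these into a mutually inverse pair of isomorphisms. The whole argument is formal once one knows that an additive functor carries homotopies to homotopies and that $H^n$ is blind to homotopy.

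First I would apply the Comparison Lemma to the identity morphism $1_A : A \to A$, once regarding $I^\bullet$ as the source resolution and $J^\bullet$ as the target, and once with the roles reversed. This produces cochain maps $\phi^\bullet : I^\bullet \to J^\bullet$ and $\psi^\bullet : J^\bullet \to I^\bullet$, each lifting $1_A$. Now both the composite $\psi^\bullet \circ \phi^\bullet$ and the identity cochain map $1_{I^\bullet}$ are cochain maps lifting $1_A = 1_A \circ 1_A$, so the uniqueness-up-to-homotopy clause of Theorem~\ref{compare} forces $\psi^\bullet \circ \phi^\bullet$ to be homotopic to $1_{I^\bullet}$; symmetrically $\phi^\bullet \circ \psi^\bullet$ is homotopic to $1_{J^\bullet}$.

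Next I would transport these homotopies through $F$. The key observation is that an additive functor preserves cochain homotopies: if $f^n - g^n = \partial^n \circ s^{n-1} + s^n \circ d^{n+1}$, then applying $F$, using additivity (Lemma~\ref{adconv}) to split the difference and functoriality to distribute over composition, yields $F(f^n) - F(g^n) = F(\partial^n) \circ F(s^{n-1}) + F(s^n) \circ F(d^{n+1})$, so $F(s^\bullet)$ is a homotopy between $F(f)^\bullet$ and $F(g)^\bullet$. Since $F$ also preserves composition and identities, the two homotopies above give $F(\psi)^\bullet \circ F(\phi)^\bullet \simeq 1_{F(I^\bullet)}$ and $F(\phi)^\bullet \circ F(\psi)^\bullet \simeq 1_{F(J^\bullet)}$.

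Finally I would apply $H^n$. The proof of the preceding theorem established that a cochain map homotopic to zero induces the zero map on cohomology; combined with the additivity of $H^n$ this shows that homotopic cochain maps induce \emph{equal} maps on cohomology. Because $H^n$ is a functor (Theorem~\ref{homfunc}), we then get $H^n(F(\psi)^\bullet) \circ H^n(F(\phi)^\bullet) = H^n(1_{F(I^\bullet)}) = 1_{R^nF(A)}$ and likewise the reverse composite equals $1_{\hat R^nF(A)}$, so $H^n(F(\phi)^\bullet) : R^nF(A) \to \hat R^nF(A)$ is the desired isomorphism. The main obstacle is the middle step, namely confirming carefully that $F$ sends a homotopy to a homotopy; but this is an immediate consequence of additivity, and everything else reduces to bookkeeping with the two functoriality statements already in hand.
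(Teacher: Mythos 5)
Your proposal is correct and follows essentially the same route as the paper: apply the Comparison Lemma to $1_A$ in both directions, observe the two composites are homotopic to identities, apply $F$ and then $H^n$. One point in your favor: you establish the homotopies $\psi^\bullet\circ\phi^\bullet\simeq 1_{I^\bullet}$ and $\phi^\bullet\circ\psi^\bullet\simeq 1_{J^\bullet}$ \emph{before} applying $F$, and then transport them through $F$ using additivity; the paper instead asserts directly that $F(i\circ i^{-1})$ and $1_{F(I^\bullet)}$ are homotopic ``as both are induced by the identity of $F(A)$,'' which tacitly invokes the uniqueness clause of Theorem~\ref{compare} for the complex $F(I^\bullet)$ -- but $F(I^\bullet)$ is generally neither exact nor a complex of injectives, so the Comparison Lemma does not apply there. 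Your ordering of the steps (homotopy upstairs, then $F$ preserves homotopies by Lemma~\ref{adconv}) is the correct way to close that gap.
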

\begin{proof}
	The identity map on $A$ induces a cochain map $i$ from $J^\bullet$ to
	$I^\bullet$ and a cochain map $i^{-1}$ from $I^\bullet$ to $J^\bullet$. Then
	$i\circ i^{-1}$ is a cochain map from $I^\bullet$ to $I^\bullet$. Applying the
	functor $F$ gives a cochain map $F(i):F(I^\bullet)\to F(I^\bullet)$ induced
	by the identity $F(1_A) = 1_{F(A)}$ on $F(A)$, which is
	homotopic to the identity on $F(I^\bullet)$ (as the identity on
	$F(I^\bullet)$ is also a map induced by the
	identity of $F(A)$), so $R^nF(i)\circ R^nF(i^{-1}) = H^n(F(i)) =
	1_{F(I^\bullet)}$. A symmetric argument shows that $R^nF(i^{-1})\circ R^nF(i) =
	1_{F(J^\bullet)}$, and so $R^nF(i)$ is an isomorphism.
\end{proof}

This construction concludes our paper. As promised, we have omitted most
motivation and examples for the sake of brevity, directing the reader to
\cite{rotman} for a less categorical development with more concrete examples,
\cite{weibel} for a more comprehensive treatment of the homological implications
of derived functors, and \cite{maclane} for more examples and theorems regarding
abelian categories.

\nocite{*}
\printbibliography

@book{maclane,
  author = {Saunders Mac Lane},
  month = {04},
  publisher = {Springer Science \& Business Media},
  title = {Categories for the Working Mathematician},
  year = {2013}
}

@book{weibel,
  author = {Weibel, Charles A},
  month = {10},
  publisher = {Cambridge University Press},
  title = {An Introduction to Homological Algebra},
  year = {1995}
}

@book{rotman,
  author = {Rotman, Joseph J},
  publisher = {Springer-Verlag},
  title = {An Introduction to Homological Algebra},
  year = {2009}
}

@misc{ruiter,
  author = {Ruiter, Joshua},
  title = {Homology in an Abelian Category},
  urldate = {2024-04-25},
  year = {2019}
}

@book{men,
  author = {Mendelson, Elliott},

  publisher = {Van Nostrand Reinhold Company},

  title = {Introduction to Mathematical Logic},

  year = {1979}
}

@misc{ha,

  author = {Lurie, Jacob},

  title = {Higher Algebra},

  url = {https://www.math.ias.edu/~lurie/papers/HA.pdf},

  urldate = {2024-04-25},

  year = {2017}

}

\end{document}